\numberwithin{equation}{section}
\newtheoremstyle{thmlemcorr}{10pt}{10pt}{\itshape}{}{\bfseries}{.}{10pt}{{\thmname{#1}\thmnumber{ #2}\thmnote{ (#3)}}}
\newtheoremstyle{thmlemcorr*}{10pt}{10pt}{\itshape}{}{\bfseries}{.}\newline{{\thmname{#1}\thmnumber{ #2}\thmnote{ (#3)}}}
\newtheoremstyle{remexample}{10pt}{10pt}{}{}{\bfseries}{.}{10pt}{{\thmname{#1}\thmnumber{ #2}\thmnote{ (#3)}}}
\newtheoremstyle{ass}{10pt}{10pt}{}{}{\bfseries}{.}{10pt}{{\thmname{#1}\thmnumber{ A#2}\thmnote{ (#3)}}}
\theoremstyle{thmlemcorr}
\newtheorem{theorem}{Theorem}
\numberwithin{theorem}{section}
\newtheorem{lemma}[theorem]{Lemma}
\newtheorem{corollary}[theorem]{Corollary}
\newtheorem{conjecture}[theorem]{Conjecture}
\newtheorem{definition}[theorem]{Definition}
\theoremstyle{thmlemcorr*}
\newtheorem{theorem*}{Theorem}
\newtheorem{lemma*}[theorem]{Lemma}
\newtheorem{corollary*}[theorem]{Corollary}
\newtheorem{proposition*}[theorem]{Proposition}
\newtheorem{problem*}[theorem]{Problem}
\newtheorem{conjecture*}[theorem]{Conjecture}
\newtheorem{definition*}[theorem]{Definition}
\theoremstyle{remexample}
\theoremstyle{ass}
\def\XXint#1#2#3{{\setbox0=\hbox{$#1{#2#3}{\int}$ }
\vcenter{\hbox{$#2#3$ }}\kern-.56\wd0}}
\newcommand{\Sbb}{\mathbb{S}}
\newcommand{\T}{\mathbb{T}}
\newcommand{\Zbb}{\mathbb{Z}}
\DeclareMathOperator{\diverg}{div}
\DeclareMathOperator{\Div}{div}
\DeclareMathOperator{\curl}{curl}
\DeclareMathOperator{\dist}{dist}
\DeclareMathOperator{\rank}{rank}
\DeclareMathOperator{\supp}{supp}
\newcommand{\norm}[1]{\|#1\|}
\newcommand{\N}{\mathbb{N}}
\newcommand{\R}{\mathbb{R}}
\newcommand{\sbullet}{\begin{picture}(1,1)(-0.5,-2.5)\circle*{2}\end{picture}}
\newcommand{\frarg}{\,\sbullet\,}
\newcommand{\eps}{\epsilon}
\newcommand{\term}[1]{\textbf{#1}}
\newcommand{\vr}{\varrho}
\def\XXint#1#2#3{{\setbox0=\hbox{$#1{#2#3}{\int}$}
\vcenter{\hbox{$#2#3$}}\kern-.5\wd0}}
\renewcommand{\eps}{\varepsilon}
\renewcommand{\epsilon}{\varepsilon}
\renewcommand{\phi}{\varphi}
\begin{document}


\title[Conserved quantities and regularity in fluid dynamics]{
Conserved quantities and regularity in fluid dynamics}

\author{Emil Wiedemann}

\address{\textit{Emil Wiedemann:} Institute of Applied Analysis, Universit\"at Ulm, Helmholtzstr.\ 18, 89081 Ulm, Germany}
\email{emil.wiedemann@uni-ulm.de}

\begin{abstract}
This is a set of lecture notes for the 2019 EMS School in Applied Mathematics held in K\'acov, Czech Republic. 

Conserved or dissipated quantities, like energy or entropy, are at the heart of the study of many classes of time-dependent PDEs in connection with fluid mechanics. This is the case, for instance, for the Euler and Navier-Stokes equations, for systems of conservation laws, and for transport equations. In all these cases, a formally conserved quantity may no longer be constant in time for a weak solution at low regularity. The delicate interplay between regularity and conservation of the respective quantity relates to renormalisation in the DiPerna-Lions theory of transport and continuity equations, and to Onsager's conjecture in the realm of ideal incompressible fluids. We will review the classical commutator methods of DiPerna-Lions and Constantin-E-Titi, and then proceed to more recent results. 
\end{abstract}








\maketitle



 \tableofcontents


\section{Introduction}
Quite unsurprisingly, conserved or dissipated quantities play a fundamental role in about any evolution differential equation related to continuum mechanics. On the analytical level, these quantities are often (as in the Navier-Stokes equations) the only source of a priori estimates to yield sufficient compactness for the existence of weak solutions. In the theory of transport equations, conserved quantities allow to show uniqueness and stability through the renormalisation theory of DiPerna-Lions~\cite{dipernalions} discussed below.

On the physical level, quantities that are formally shown to be conserved can, in fact, be observed to be dissipated. This kind of anomalous dissipation occurs, e.g., in the incompressible Euler equations at low regularity due to turbulent energy transfer to high scales, as predicted by Onsager~\cite{ON} based on Kolmogorov's phenomenological theory of turbulence; hence, at low regularity, one may observe that the kinetic energy 
\begin{equation}\label{globaleuler}
\frac12\int |u(x,t)|^2dx
\end{equation}   
is actually decreasing in time, although by formal computation (as it is justified for smooth solutions) it would be constant. In fact, energy conservation formally holds in the stronger local sense
\begin{equation}\label{localeuler}
\partial_t \frac{|u|^2}{2}+\diverg\left(\left(\frac{|u|^2}{2}+p\right)u\right)=0,
\end{equation}
which implies the total conservation of energy upon integration in time and using the Divergence Theorem.

The past decades have seen increasingly sophisticated constructions giving rigorous examples of weak solutions to the Euler equations with non-conserved energy~\cite{Scheffer, shnirel1, shnirel2, euler1, euler2, eulerinvent, isett16, buckmasteretal17}, culminating in a complete proof of Onsager's Conjecture on the threshold regularity up to which anomalous dissipation can occur. This regularity is essentially at $C^{1/3}$ (the H\"older space with exponent $1/3$), and below we will at least show that $1/3$ is an upper bound. 

A more classical phenomenon in hyperbolic conservation laws is the formation of shock waves, which also leads to a decrease of the (mathematical) entropy. For instance, the inviscid Burgers equation
\begin{equation}\label{localburgers}
\partial_t u+\partial_x(u^2)=0
\end{equation}
is easily seen to satisfy the equality $\partial_t\frac{u^2}{2}+\partial_x\frac{2u^3}{3}=0$, and thus to conserve the so-called entropy $\frac12\int u^2 dx$, as long as the solution remains smooth, but it is equally easy to explicitly find an example of a weak solution that becomes discontinuous in finite time and then has strictly decreasing entropy. More precisely, given the Lipschitz initial datum
\begin{equation*}
u_0(x):=\begin{cases}
1 & \text{if $x\leq0$,}\\
1-x & \text{if $0\leq x\leq 1$,}\\
0 & \text{if $x>1$},
\end{cases}
\end{equation*}

for any time $t<1$ a solution is given by 
\begin{equation*}
u(x,t)=\begin{cases}
1 & \text{if $x\leq t$,}\\
\frac{1-x}{1-t} & \text{if $t\leq x\leq 1$,}\\
0 & \text{if $x>1$},
\end{cases}
\end{equation*}
which can be extended to $t\geq 1$ by

\begin{equation*}
u(x,t)=\begin{cases}
1 & \text{if $x<\frac{t+1}{2}$,}\\
0 & \text{if $x>\frac{t+1}{2}$},
\end{cases}
\end{equation*}
and it holds true that, after $t=1$, there are smooth test functions $\phi\geq0$ such that
\begin{equation*}
\int\int(\partial_t\phi \frac{u^2}{2}+\partial_x\phi \frac{2u^3}{3})dxdt>0,
\end{equation*}
which means the entropy conservation is violated (in accordance with the Second Law of Thermodynamics).

But even when a certain quantity is seen to be dissipative already on the formal level, it is still important to know whether it satisfies a predicted balance as an equality or not. For example, the incompressible Navier-Stokes equations formally satisfy
\begin{equation}\label{NSEbalance}
\frac 12\partial_t\int |u|^2 dx + \nu\int_0^t\int |\nabla u|^2 dxds=\frac 12\int |u_0|^2dx, 
\end{equation}  
where $\nu>0$ is the viscosity of the modelled fluid; but it can only be shown to hold as an inequality ($=$ replaced by $\leq$) for generic weak solutions of Leray-Hopf type. In contrast to the Euler equations and hyperbolic conservation laws, however, there seems to be no mathematical or physical reason to believe that~\eqref{NSEbalance} should hold with strict inequality. 

Let us condense this somewhat loose collection of observations into some general ideas that will form the focus of this survey:
\begin{itemize}
\item A great variety of partial differential equations related to continuum dynamics exhibit quantities that can be easily seen, by formal calculation invoking the chain rule of differential calculus, to be conserved in time. For smooth solutions, these calculations are easily justified rigorously.
\item These conservation laws can be formulated in a local (e.g.,~\eqref{localeuler},~\eqref{localburgers}) or in a global (e.g.,~\eqref{globaleuler},~\eqref{NSEbalance}) way.
\item On the other hand, less regular solutions may not conserve these quantities. This relates to physically observable effects of anomalous energy dissipation due to turbulence, or to increase of physical entropy due to the Second Law of Thermodynamics.
\item It is therefore worth investigating the threshold regularity below which such dissipative effects can occur. 
\end{itemize}

Our discussion here mainly focuses on inviscid models. We begin with the arguably simplest case of linear scalar conservation laws, i.e., linear transport equations, which possess infinitely many conserved quantities. The question of threshold regularity can be interpreted in several ways, depending on whether one wishes to impose regularity conditions on the coefficients alone, or on the coefficients and the solution combined. The first approach leads to the DiPerna-Lions theory of renormalisation, while the second one motivates the commutator estimates of Constantin-E-Titi~\cite{CET}. This is presented in Section~\ref{renormsec}.

The mentioned techniques give bounds from above for the sought threshold regularity (i.e., sufficient conditions for conservation), and can be viewed as restoring the chain rule in regimes of low regularity. On the other side, the construction of examples of non-conservation, and therefore of breakdown of the chain rule, can be much harder. We have seen that classical shocks provide such examples in the context of hyperbolic equations, but in incompressible models, shocks are not available. Instead, convex integration has recently become the method of choice to construct dissipative solutions of the Euler equations. In Section~\ref{chainrule}, we outline the method of convex integration in the comparatively simple setting of steady transport, and thus provide counterexamples to renormalisation.

The conservative part of Onsager's Conjecture for the incompressible Euler system forms the topic of Section~\ref{onsagersec}, including a discussion of recent results concerning bounded domains and the vanishing viscosity limit. The subsequent sections are devoted to various recent extensions of the commutator method to statistical solutions, general conservation laws, and degenerate cases such as the compressible Euler and Navier-Stokes equations with possible vacuum.

\section{Renormalisation of Transport Equations}\label{renormsec}
The first partial differential (PDE) considered in the textbook~\cite{evans} of L.~C.~Evans is also the supposedly simplest one: the \emph{transport equation}
\begin{equation}\label{transporteq}
\partial_t \rho+u\cdot\nabla \rho=0.
\end{equation} 
For further simplicity, let us consider the case of periodic boundary conditions. Then, if $\T^d:=\R^d/\Zbb^d$ is the flat torus, one usually considers the vector field $u:\T^d\times [0,T]\to\R^d$ as given and divergence-free. The scalar field $\rho:\T^d\times[0,T]\to\R$ is the unknown, which could be subject to an initial condition $\rho(\cdot,0)=\rho^0$.

The transport equation can be interpreted, for instance, as follows: The given field $u$ can be thought of as the known velocity field of an incompressible flow (hence the divergence-free condition), such as water on the surface of the ocean (in which case $d=2$). The scalar $\rho$ then gives the concentration of, say, a chemical dissolved on the ocean surface, and the chemical is transported by the given flow. As, in this very simple model, the chemical has no effect on the dynamics of the transporting flow, $\rho$ is sometimes called a \emph{passive scalar} or a \emph{passive tracer}. 

A remarkable property of the transport equation is that it can be \emph{renormalised}: Suppose an arbitrary $C^1$ function $\eta:\R\to\R$ is given, then~\eqref{transporteq} can be multiplied by $\eta'(\rho)$ to yield, by means of the chain rule,
\begin{equation}\label{renorm}
\partial_t\eta(\rho)+u\cdot\nabla \eta(\rho)=0,
\end{equation}
so that in fact arbitrary functions of a solution become a solution of the same equation. Moreover, integration in space yields $\frac{d}{dt}\int_{\T^d}\eta(\rho)dx=0$, and therefore from this renormalisation procedure we obtain infinitely many conserved quantities for~\eqref{transporteq}. 

However, the chain rule (we used it in the form $\partial_t \eta(\rho)=\eta'(\rho)\partial_t \rho$ and $\nabla\eta(\rho)=\eta'(\rho)\nabla \rho$) is only justified if $\rho$ is $C^1$, or at least Lipschitz. So the question arises:

\emph{Under what conditions on $u$ and/or $\rho$ are solutions of the transport equation renormalised?}

First of all, how does the transport even make sense if $\rho$ is not $C^1$? As usual, one considers a distributional concept of solution, using the fact that $u\cdot\nabla \rho=\Div(\rho u)$ by virtue of the divergence-free condition on $u$. Therefore, a function $\rho\in L^{\infty}(\T^d\times [0,T])$ is called a \emph{weak solution} of~\eqref{transporteq} with initial data $\rho^0\in L^\infty(\T^d)$ if, for every $\phi\in C_c^1(\T^d\times[0,T))$, we have
\begin{equation*}
\int_0^T\int_{\T^d} \partial_t\phi \rho+\rho\nabla\phi\cdot u dxdt=\int_{\T^d}\phi(x,0)\rho^0(x)dx.
\end{equation*} 
Note this definition makes sense as long as $u\in L^1_{loc}(\T^d\times[0,T))$.

Renormalisation plays an important role not only for the study of~\eqref{transporteq} or of ordinary differential equations, but also for larger systems of PDEs that contain~\eqref{transporteq} or the closely related \emph{continuity equation} 
\begin{equation}
\partial_t \rho+\Div(\rho u)=0,
\end{equation}
where $u$ is no longer assumed divergence-free. Consider two examples: The \emph{isentropic compressible Navier-Stokes system} reads
\begin{equation}\label{compNSE}
\begin{aligned}
\partial_t(\rho u)+\Div(\rho u\otimes u)+\nabla p(\rho)&=\Div\mathbb{S}(\nabla u),\\
\partial_t \rho+\Div(\rho u)&=0,
\end{aligned}
\end{equation}
where the scalar density $\rho$ is now assumed non-negative, $p$ is a given function of density, and $\Sbb$ denotes the Newtonian stress tensor. The velocity $u$ is a vector field $\T^d\times[0,T]\to\R^d$. 

Let us ignore the details of the constitutive theory for the moment and imagine $\Sbb$ as the identity, so that the right hand side can be thought of simply as $\Delta u$. It is well-known that these equations satisfy a priori bounds given by the energy inequality
\begin{equation*}
\int_{\T^d}\frac12\rho(x,t)|u(x,t)|^2+P(\rho(x,t))dx + \int_0^t\int_{\T^d}\Sbb(\nabla u):\nabla u dx ds \leq \int_{\T^d}\frac12\rho^0|u^0|^2+P(\rho^0)dx.
\end{equation*} 

Here $P$ is the so-called pressure potential given by
\begin{equation}\label{presspot}
P(r)=r\int_1^r \frac{p(s)}{s^2}ds.
\end{equation}
 Since $\Sbb(\nabla u):\nabla u\geq c|\nabla u|^2$, we obtain a bound for $u$ in $L^2(0,T;H^1(\T^d))$, whereas for $\rho$ we obtain only integrability, but no regularity properties. In fact, this is to be expected since~\eqref{compNSE} is parabolic in the momentum equation but only hyperbolic in the mass equation.

The question whether the density can be renormalised plays an important role, e.g., in the Lions-Feireisl theory of weak solutions for~\eqref{compNSE}, see~\cite{lions, feireisl}. As we just saw, we obtain a priori information on the regularity of the transporting velocity, but not on the transported scalar. In this context, one thus asks: Under what (Sobolev) regularity assumptions on the transporting velocity field $u$ is every bounded weak solution $\rho$ of~\eqref{transporteq} renormalised (i.e.\ every smooth function of $\rho$ is again a weak solution of~\eqref{transporteq})? This is the subject of the famous DiPerna-Lions theory that we will outline shortly.

As another example, consider an \emph{active scalar equation} of the form
\begin{equation}
\begin{aligned}
\partial_t \rho+u\cdot \nabla\rho &=0,\\
\Div u&=0,\\
u=T[\rho],
\end{aligned}
\end{equation}
where $T$ is a Fourier multiplier operator of order zero. For instance, if (for $d=2$) the symbol of $T$ is given by $i\frac{\xi^\perp}{|\xi|}$, we obtain the well-known surface quasi-geostrophic (SQG) equation.

The only thing of interest at the moment, anyway, is the fact that $T$ is bounded from $L^p$ to $L^p$ ($1<p<\infty$) and therefore, the active scalar $\rho$ and the velocity $u$ have the same (Lebesgue, Besov, H\"older etc.) regularity. In contrast to the Navier-Stokes equations, therefore, we can not (or do not have to) distinguish between the regularities of the scalar and the vector field.

A similar example is given by the \emph{incompressible Euler equations}
\begin{equation}\label{inceuler}
\begin{aligned}
\partial_t u+(u\cdot \nabla)u+\nabla p&=0\\
\Div u&=0,
\end{aligned}
\end{equation} 
where in some sense the velocity is transported by itself (this is the effect of \emph{advection}). Again, there is no way to distinguish between the regularities of the transporting and the transported quantity. We will get back to the Euler equations later.

\subsection{DiPerna-Lions commutators}
Again let us simply consider the transport equation~\eqref{transporteq} and assume that $\rho, u$ form a weak solution (where $u$ is still divergence-free and $\rho$ is bounded for simplicity). In cases where some regularity is known for $u$ (loosely speaking, at least one full distributional space derivative), but none for $\rho$, the theory of DiPerna-Lions is useful.

Let $\chi:\T^d\to\R$ be a standard mollifier, i.e.\ a smooth non-negative radially symmetric function with compact support in $B_1(0)$ and $\int_{B_1(0)}\chi dx=1$. Set $\chi_\epsilon(x)=\epsilon^{-d}\chi\left(\frac x\epsilon\right)$, which is supported on $B_\epsilon(0)$ and still has unit integral. For a function $f$, We write $f_\epsilon:=f*\chi_\epsilon$.

Mollifying~\eqref{transporteq}, we obtain (ignoring issues of time differentiability)
\begin{equation}\label{molltransport}
0=\partial_t\rho_\epsilon+\Div(\rho u)_\epsilon=\partial_t\rho_\epsilon+\Div(\rho_\epsilon u)+R_\epsilon,
\end{equation}
where 
\begin{equation}
R_\epsilon=\Div(\rho u)_\epsilon-\Div(\rho_\epsilon u)
\end{equation}
is the \emph{commutator}. Let now $\eta\in C^1$ and multiply~\eqref{molltransport} by $\eta'(\rho_\epsilon)$. Since $\rho_\epsilon$ is smooth, we can apply the chain rule to obtain
\begin{equation}
\partial_t\eta(\rho_\epsilon)+ \Div(\eta(\rho_\epsilon) u)=-\eta'(\rho_\epsilon)R_\epsilon
\end{equation}  
(recall that $u$ is divergence-free, so that $\Div(\rho_\epsilon u)=u\cdot\nabla \rho_\epsilon$). First, it is clear that the terms on the left hand side converge, in the sense of distributions as $\epsilon\to0$, to $\partial_t\eta(\rho)+ \Div(\eta(\rho) u)$. Hence, as $\eta'(\rho_\epsilon)$ is bounded uniformly in $\epsilon$, it suffices to show that $R_\epsilon \to 0$ in $L^1(\T^d\times[0,T])$.

To this end, we compute
\begin{equation*}
\begin{aligned}
R_\epsilon(x,t)&=\Div(\rho u)*\chi_\epsilon(x,t)-\Div((\rho*\chi_\epsilon) u(x,t))\\
&=-\rho u*\nabla\chi_\epsilon(x,t)+\rho*\nabla\chi_\epsilon\cdot u(x,t)\\
&=\int_{B_\epsilon(x)} \rho(y,t)(u(x,t)-u(y,t))\cdot \nabla\chi_\epsilon(x-y)dy\\
&=\epsilon^{-d-1}\int_{B_\epsilon(x)} \rho(y,t)(u(x,t)-u(y,t))\cdot \nabla\chi\left(\frac{x-y}{\epsilon}\right)dy\\
&=-\int_{B_1(0)} \rho(x+\epsilon z,t)\frac{u(x+\epsilon z,t)-u(x,t)}{\epsilon}\cdot \nabla\chi\left(z\right)dz,
\end{aligned}
\end{equation*}
where in the end we used the transformation $z=\frac{y-x}{\epsilon}$.

Suppose that $u\in L^1(0,T;W^{1,1}(\T^d))$, then, by standard difference quotient lemmas, $\frac{u(x+\epsilon z,t)-u(x,t)}{\epsilon}$ converges, as $\epsilon\to0$, to the directional derivative $\partial_z u(x)$ in $L^1(\T^d\times (0,T))$ for fixed $z$; moreover it is bounded in $L^1$ uniformly in $z$ and $\epsilon$. Since $\rho(x+\epsilon z)$ is in $L^\infty$, uniformly in $\epsilon$ and $z$, and converges in $L^1$ to $\rho(x)$ for fixed $z$, we obtain the strong $L^1$ convergence
\begin{equation}
\begin{aligned}
R_\epsilon(x,t)&\to-\rho(x,t)\int_{B_1(0)} \partial_zu(x,t)\cdot \nabla\chi\left(z\right)dz\\
&=-\rho(x,t)\partial_j u_i(x,t)\int_{B_1(0)} z_j\partial_i\chi\left(z\right)dz\\
&=\rho(x,t)\delta_{ij}\partial_ju_i(x,t)=\rho(x,t)\Div u(x,t)=0,
\end{aligned}
\end{equation} 
as desired.

As the convergence argument may not be obvious, let us give it in more detail: Write $D_\epsilon(x,z,t):=\frac{u(x+\epsilon z,t)-u(x,t)}{\epsilon}$ and $\rho_\epsilon(x,z,t):=\rho(x+\epsilon z,t)$, then
\begin{equation*}
\begin{aligned}
&\int_{\T^d\times(0,T)}\left|R_\epsilon(x,t)+\int_{B_1(0)}\partial_zu(x,t)\cdot \nabla\chi\left(z\right)dz\right|dxdt\\
&\leq \|\nabla\chi\|_\infty\left(\int_{\T^d\times(0,T)}\int_{B_1(0)}|\rho_\epsilon(x,z,t)D_\epsilon(x,z,t)-\rho_\epsilon(x,z,t)\partial_zu(x,t)|dzdxdt\right.\\
&\hspace{2cm}+\left.\int_{\T^d\times(0,T)}\int_{B_1(0)}|\rho_\epsilon(x,z,t)\partial_z u(x,t)-\rho(x,t)\partial_z u(x,t)|dzdxdt\right)\\
&\leq \|\nabla\chi\|_\infty\sup_{\epsilon, z}\|\rho_\epsilon\|_\infty\int_{B_1(0)}\int_{\T^d\times(0,T)}|D_\epsilon-\partial_z u|dxdz\\
 &\hspace{2cm}+  \|\nabla\chi\|_\infty\sup_{z}\|\partial_z u\|_{L^1}\int_{B_1(0)}\int_{\T^d\times(0,T)}|\rho_\epsilon-\rho|dx dz.
\end{aligned}
\end{equation*}
As mentioned above, for each $z$, $\int_{\T^d\times(0,T)}|D_\epsilon-\partial_z u|dx$ converges to zero as $\epsilon\to0$, and the same is true for $\int_{\T^d\times(0,T)}|\rho_\epsilon-\rho|dx$. Since these expressions are dominated, respectively, by 
\begin{equation*}
2\sup_z\int_{\T^d\times(0,T)}|\partial_z u(x)|dx,\quad 2\int_{\T^d\times(0,T)}|\rho(x)|dx, 
\end{equation*}
which are constant and hence integrable in $z$, we conclude by the Dominated Convergence Theorem.

We have thus proved:
\begin{theorem}[DiPerna-Lions]
If $u\in L^1(0,T;W^{1,1}(\T^d))$, then every bounded weak solution of~\eqref{transporteq} is renormalised in the sense of~\eqref{renorm}.
\end{theorem}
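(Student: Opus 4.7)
The plan is to upgrade the weak transport equation to a smooth approximation via mollification, apply the classical chain rule to the smooth equation, and then close the argument by showing that the resulting commutator error vanishes in $L^1$. Concretely, I would fix a standard mollifier $\chi_\epsilon$ and convolve \eqref{transporteq} to obtain \eqref{molltransport}, with commutator $R_\epsilon = \Div(\rho u)_\epsilon - \Div(\rho_\epsilon u)$ measuring the failure of convolution to commute with the transport operator. Because $\rho_\epsilon$ is smooth in $x$, multiplying by $\eta'(\rho_\epsilon)$ and using $\Div u = 0$ yields the pointwise identity $\partial_t \eta(\rho_\epsilon) + \Div(\eta(\rho_\epsilon) u) = -\eta'(\rho_\epsilon) R_\epsilon$.

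Passing to the limit in the distributional formulation of this identity against a test function $\phi \in C_c^1$ is then a two-step matter. On the left, $\rho_\epsilon \to \rho$ in $L^p_\loc$ for all finite $p$ by standard mollification, combined with the uniform $L^\infty$ bound, so $\eta(\rho_\epsilon) \to \eta(\rho)$ strongly in $L^p_\loc$ by continuity and dominated convergence, and the product with $u \in L^1_{t,x}$ converges in $L^1_\loc$. The conclusion \eqref{renorm} will therefore follow as soon as $\eta'(\rho_\epsilon) R_\epsilon \to 0$ in $L^1$; this reduces to $R_\epsilon \to 0$ in $L^1(\T^d \times (0,T))$, since $\eta'(\rho_\epsilon)$ is uniformly bounded.

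The heart of the argument is the commutator estimate. After using $\Div u = 0$ to rewrite $R_\epsilon$ purely as a convolution against $\nabla \chi_\epsilon$ and changing variables $z = (y-x)/\epsilon$, I would obtain the representation
\begin{equation*}
R_\epsilon(x,t) = -\int_{B_1(0)} \rho(x+\epsilon z,t)\, D_\epsilon(x,z,t) \cdot \nabla \chi(z)\, dz,
\end{equation*}
with $D_\epsilon(x,z,t) := (u(x+\epsilon z,t) - u(x,t))/\epsilon$. The Sobolev hypothesis $u \in L^1(0,T;W^{1,1}(\T^d))$ ensures that for each fixed $z$, $D_\epsilon \to (z\cdot\nabla)u$ in $L^1(\T^d \times (0,T))$, with an $\epsilon$- and $z$-uniform $L^1$ bound by $|z|\, \|\nabla u\|_{L^1}$. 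Meanwhile, $\rho(\cdot + \epsilon z, \cdot) \to \rho$ in $L^1$ for each $z$, with uniform $L^\infty$ control. Splitting $R_\epsilon - (\text{candidate limit})$ into the two natural differences and integrating first in $(x,t)$ then in $z$, with dominating functions that are integrable on $B_1(0)$, I invoke dominated convergence to get strong $L^1$ convergence of $R_\epsilon$ to $-\rho(x,t) \, \partial_j u_i(x,t) \int_{B_1(0)} z_j \partial_i \chi(z)\, dz$. An integration by parts identifies this integral as $-\delta_{ij}$, so the candidate limit collapses to $\rho\, \Div u = 0$, as required.

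The main obstacle is exactly this commutator step: the cancellation that drives $R_\epsilon \to 0$ is not visible at the pointwise level but only after the $z$-integration, and the exchange of the $\epsilon \to 0$ limit with the $z$-integral demands a careful uniform-in-$z$ domination. Once this is in place, the rest is bookkeeping: testing the mollified equation against $\phi$, passing to the limit term by term, and reading off \eqref{renorm}.
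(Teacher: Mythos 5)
Your proposal is correct and follows essentially the same route as the paper: mollify in space, multiply by $\eta'(\rho_\epsilon)$, and kill the DiPerna--Lions commutator $R_\epsilon$ in $L^1$ via the change of variables $z=(y-x)/\epsilon$, convergence of difference quotients under the $W^{1,1}$ hypothesis, dominated convergence in $z$, and the moment identity $\int_{B_1(0)} z_j\partial_i\chi\,dz=-\delta_{ij}$ reducing the limit to $\rho\,\Div u=0$. Nothing essential is missing.
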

Of course this result holds in much greater generality (the assumptions $\rho\in L^\infty$ and $\Div u=0$ can be substantially relaxed). In fact it follows easily from renormalisation that the Cauchy problem for~\eqref{transporteq} admits a unique weak solution.

\subsection{Constantin-E-Titi commutators}
Let us present a commutator argument that looks very similar to the previous one, but leads to very different conclusions. 
Once again we mollify equation~\eqref{transporteq} in space,
\begin{equation}
0=\partial_t\rho_\epsilon+\Div(\rho u)_\epsilon=\partial_t\rho_\epsilon+\Div(\rho_\epsilon u_\epsilon)+S_\epsilon,
\end{equation} 
with 
\begin{equation}
S_\epsilon=\Div(\rho u)_\epsilon-\Div(\rho_\epsilon u_\epsilon).
\end{equation}
Note the only difference compared to~\eqref{molltransport} is that we chose to mollify also $u$. Multiplying again by $\eta'(\rho_\epsilon)$, we obtain (noting that $u_\epsilon$ is still divergence-free)
\begin{equation}
\partial_t\eta(\rho_\epsilon)+\Div(\eta(\rho_\epsilon)u_\epsilon)=-\eta'(\rho_\epsilon)S_\epsilon,
\end{equation}
so we obtain renormalisation provided we can show the the right hand side converges to zero, in the sense of distributions, as $\epsilon\to0$.  

To this end, let $\phi\in C_c^1(\T^d\times(0,T))$, so that integration by parts yields
\begin{equation}
\begin{aligned}
-\int_0^T\int_{\T^d}\phi\eta'(\rho_\epsilon)S_\epsilon dxdt &= \int_0^T\int_{\T^d}\eta'(\rho_\epsilon)\nabla\phi\cdot ((\rho u)_\epsilon-\rho_\epsilon u_\epsilon) dxdt\\
& + \int_0^T\int_{\T^d}\phi\eta''(\rho_\epsilon)\nabla\rho_\epsilon\cdot ((\rho u)_\epsilon-\rho_\epsilon u_\epsilon) dxdt.
\end{aligned}
\end{equation}
We only treat the second integral, as the first one is easier. Assume to this end that $\eta''$ is bounded, and consider the pointwise identity
\begin{equation}
(\rho u)_\epsilon-\rho_\epsilon u_\epsilon = -(\rho_\epsilon-\rho)(u_\epsilon-u)+\int_{B_\epsilon(0)} \chi_\epsilon(y)(\rho(\cdot-y)-\rho)(u(\cdot-y)-u)dy.
\end{equation}
Thus, one part of the desired estimate is obtained by
\begin{equation}
\int_0^T\int_{\T^d}|\nabla\rho_\epsilon| |\rho_\epsilon-\rho||u_\epsilon-u| dxdt\leq \|\nabla\rho_\epsilon\|_{L^p}\|\rho-\rho_\epsilon\|_{L^p}\|u-u_\epsilon\|_{L^q}
\end{equation}
for some exponents satisfying $\frac2p+\frac1q\leq1$. The other part can be estimated similarly, so we ignore it.

The question arises under what conditions these norms converge to zero as $\epsilon\to0$. To this end, suppose that
\begin{equation}\label{BesovVMO}
\lim_{\epsilon\to0} \int_0^T\int_{\T^d}\int_{B_\epsilon(0)}\frac{|\rho(x)-\rho(x-y)|^p}{\epsilon^{d+\alpha p}}dydxdt=0.
\end{equation}
We aim to show that, under this assumption, $\lim_{\epsilon\to0}\epsilon^{-\alpha}\|\rho-\rho_\epsilon\|_{L^p}=0$. Indeed, using Jensen's inequality and the definition of $\chi_\epsilon$,
\begin{equation}
\begin{aligned}
\epsilon^{-\alpha p}\|\rho-\rho_\epsilon\|_{L^p}^p&=\int_0^T\int_{\T^d}\left|\int_{B_\epsilon(0)}\frac{\rho(x)-\rho(x-y)}{\epsilon^\alpha}\chi_\epsilon(y)dy\right|^pdxdt\\
&\leq \int_0^T\int_{\T^d}\int_{B_\epsilon(0)}\frac{|\rho(x)-\rho(x-y)|^p}{\epsilon^{d+\alpha p}}\chi\left(\frac y\epsilon\right)dydxdt,
\end{aligned}
\end{equation}
which converges to zero by virtue of assumption~\eqref{BesovVMO}. Likewise, we have $\lim_{\epsilon\to0}\epsilon^{-\beta}\|u-u_\epsilon\|_{L^q}=0$ provided
\begin{equation}\label{BesovVMO2}
\lim_{\epsilon\to0} \int_0^T\int_{\T^d}\int_{B_\epsilon(0)}\frac{|u(x)-u(x-y)|^q}{\epsilon^{d+\beta q}}dydxdt=0.
\end{equation}
Finally, it is not difficult to get the estimate $\lim_{\epsilon\to0}\epsilon^{1-\alpha}\|\nabla\rho_\epsilon\|_{L^p}=0$.
Assume now that $2\alpha+\beta\geq 1$, then 
\begin{equation}
\begin{aligned}
\|\nabla\rho_\epsilon\|_{L^p}&\|\rho-\rho_\epsilon\|_{L^p}\|u-u_\epsilon\|_{L^q}\leq\\
&\epsilon^{1-\alpha}\|\nabla\rho_\epsilon\|_{L^p}\epsilon^{-\alpha}\|\rho-\rho_\epsilon\|_{L^p}\epsilon^{-\beta}\|u-u_\epsilon\|_{L^q}\to0,
\end{aligned}
\end{equation}
and so we arrive at the following result:
\begin{theorem}\label{transportthm}
Let $\frac 2p+\frac 1q\leq 1$ and $2\alpha+\beta\geq 1$, and $\rho\in L^p(\T^d\times(0,T))$ and $u\in L^q(\T^d\times(0,T))$ be a weak solution of~\eqref{transporteq} satisfying~\eqref{BesovVMO} and~\eqref{BesovVMO2}. If $\eta\in W^{2,\infty}(\R)$, then the renormalised equation holds in the sense of distributions:
\begin{equation}
\partial_t\eta(\rho)+u\cdot \nabla \eta(\rho)=0.
\end{equation}
\end{theorem}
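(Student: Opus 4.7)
The plan is to mollify~\eqref{transporteq} in the space variable and pass to the limit, following the commutator scheme set out in the discussion immediately preceding the statement. Writing $\rho_\epsilon := \rho*\chi_\epsilon$ and $u_\epsilon := u*\chi_\epsilon$, mollification produces
\begin{equation*}
\partial_t\rho_\epsilon + \Div(\rho_\epsilon u_\epsilon) = -K_\epsilon,\qquad K_\epsilon := \Div(\rho u)_\epsilon - \Div(\rho_\epsilon u_\epsilon),
\end{equation*}
and since $\rho_\epsilon$ is smooth, $u_\epsilon$ remains divergence free, and $\eta\in W^{2,\infty}(\R)$, the classical chain rule delivers
\begin{equation*}
\partial_t \eta(\rho_\epsilon) + \Div(\eta(\rho_\epsilon) u_\epsilon) = -\eta'(\rho_\epsilon) K_\epsilon.
\end{equation*}
Because $\eta'\in L^\infty$ the map $\eta$ is globally Lipschitz, so $\eta(\rho_\epsilon) \to \eta(\rho)$ in $L^p(\T^d\times(0,T))$; combined with $u_\epsilon \to u$ in $L^q$ and the compatibility $\tfrac{2}{p}+\tfrac{1}{q}\leq 1$, this forces the left-hand side to converge in $\Dcal'$ to $\partial_t\eta(\rho) + \Div(\eta(\rho) u) = \partial_t\eta(\rho)+u\cdot\nabla\eta(\rho)$. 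The whole task therefore reduces to showing $\eta'(\rho_\epsilon) K_\epsilon \to 0$ in $\Dcal'(\T^d\times(0,T))$.

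Testing against $\phi\in C_c^1(\T^d\times(0,T))$ and integrating by parts puts the commutator term in the form
\begin{equation*}
-\iint \phi\,\eta'(\rho_\epsilon) K_\epsilon \,dx\,dt = \iint \eta'(\rho_\epsilon)\,\nabla\phi\cdot F_\epsilon \,dx\,dt + \iint \phi\,\eta''(\rho_\epsilon)\,\nabla\rho_\epsilon\cdot F_\epsilon \,dx\,dt,
\end{equation*}
where $F_\epsilon := (\rho u)_\epsilon - \rho_\epsilon u_\epsilon$ admits the pointwise decomposition already recalled in the excerpt, namely $-(\rho_\epsilon-\rho)(u_\epsilon-u)$ plus a convolution average of the translated products $(\rho(\cdot-y)-\rho)(u(\cdot-y)-u)$. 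For the harder second integral, H\"older's inequality with the triple $(p,p,q)$ permitted by $\tfrac{2}{p}+\tfrac{1}{q}\leq 1$, applied to the first piece of $F_\epsilon$, bounds the contribution by a constant times
\begin{equation*}
\norm{\nabla\rho_\epsilon}_{L^p}\,\norm{\rho-\rho_\epsilon}_{L^p}\,\norm{u-u_\epsilon}_{L^q},
\end{equation*}
while the convolution piece is handled by Minkowski's integral inequality, the translates $\rho(\cdot-y)-\rho$ and $u(\cdot-y)-u$ satisfying the same Besov moduli uniformly in $y\in B_1(0)$. Assumptions~\eqref{BesovVMO} and~\eqref{BesovVMO2} now supply the rates $\norm{\rho-\rho_\epsilon}_{L^p}=\SmallO(\epsilon^\alpha)$ and $\norm{u-u_\epsilon}_{L^q}=\SmallO(\epsilon^\beta)$, and a standard mollifier differentiation argument, together with~\eqref{BesovVMO}, produces $\norm{\nabla\rho_\epsilon}_{L^p}=\SmallO(\epsilon^{\alpha-1})$. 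Their product is $\SmallO(\epsilon^{2\alpha+\beta-1})$, which tends to zero since $2\alpha+\beta\geq 1$. The first integral on the right-hand side is treated by the same H\"older scheme but without the $\nabla\rho_\epsilon$ factor, and is consequently easier.

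The main technical obstacle is precisely the appearance of $\nabla\rho_\epsilon$ in the leading commutator term: it carries an unavoidable $\epsilon^{-1}$ blow-up, and the borderline case $2\alpha+\beta=1$ can only be closed because the Besov-VMO type hypotheses deliver \emph{strict} $\SmallO$ convergence rather than mere $\BigO$ bounds. Beyond this, the only remaining bookkeeping is to make the convolution-remainder estimate uniform in the translation parameter $y$ and to apply dominated convergence in $y$ (the integrand being dominated by the constant-in-$y$ bounds used in the main estimate), after which the distributional limit $\eta'(\rho_\epsilon) K_\epsilon \to 0$ is established and the renormalised equation follows.
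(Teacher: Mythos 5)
Your proposal is correct and follows essentially the same route as the paper: mollify both $\rho$ and $u$, multiply by $\eta'(\rho_\epsilon)$, test and integrate by parts, split the commutator via the pointwise identity, and close the key term $\|\nabla\rho_\epsilon\|_{L^p}\|\rho-\rho_\epsilon\|_{L^p}\|u-u_\epsilon\|_{L^q}$ using the $\SmallO(\epsilon^{\alpha-1})$, $\SmallO(\epsilon^{\alpha})$, $\SmallO(\epsilon^{\beta})$ rates supplied by~\eqref{BesovVMO} and~\eqref{BesovVMO2} together with $2\alpha+\beta\geq 1$. The extra details you supply (convergence of the left-hand side, Minkowski for the convolution remainder) are exactly the steps the paper declares routine, so there is no substantive difference.
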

Several remarks are in order. This commutator method relies on the ideas in~\cite{CET} in the context of Onsager's conjecture (which we will get back to later). In the case of Euler, the transporting field and the transported quantity are identical, which forces $p=q\geq 3$ and $\alpha=\beta\geq \frac13$. Mixed regularities for $\rho$ and $u$ were introduced in~\cite{LSh, FGSW}. Theorem~\ref{transportthm} was proved in~\cite{AkWi}, but in a slightly more restrictive functional framework. The paper~\cite{AkWi} also relaxes the condition $\eta\in W^{2,\infty}(\R)$, and gives an application to active scalar equations. Conditions~\eqref{BesovVMO} and~\eqref{BesovVMO2} were first formulated in~\cite{FjWi} and thus generalised previous works like~\cite{CET, duchonrobert, CCFS08}. They are implied, for instance, by $\rho\in L^p((0,T);C^{\alpha'}(\T^d))$ or $\rho\in L^p((0,T);B_{p,\infty}^{\alpha'}(\T^d))$ for any $\alpha'>\alpha$, where $C^{\alpha'}$ and $B_{p,\infty}^{\alpha'}$ denote the respective H\"older and Besov spaces, and similarly for $u$.

\section{Counterexamples to Renormalisation via Convex Integration}\label{chainrule}
It may seem strange that renormalisation could fail for a simple linear equation like~\eqref{transporteq}. But in fact, there is a great deal of counterexamples known, starting from the original paper of DiPerna-Lions and extended in various ways in~\cite{depauw, shvydkoy, AlBiCr, IsVi, CGSW2, CGSW1, MSz, MSz2, MSa}. A powerful instrument to produce rough and often pathological solutions to certain PDEs is known as \emph{convex integration}. We present here the main ideas from~\cite{CGSW1} (full details can be found in that paper). The result discussed here is by no means optimal in terms of regularity of the velocity field, but it does show that renormalisation can fail in basically any conceivable way, and it gives an idea of the general technique of convex integration in a comparatively simple setting.

We consider the stationary problem in 3D, and we aim to show the following:
\begin{theorem}
Let $f\in\mathcal{D}'(\T^3;\R)$ be a distribution such that there exists a bounded continuous solution of the equation $\Div w=f$. Then, there exist a bounded vectorfield $u\in L^\infty(\T^3;\R^3)$ and a bounded scalar field $\rho\in L^\infty(\T^3)$ such that
\begin{equation}\label{original}
\begin{aligned}
\Div(\rho u)&=0\\
\Div(u)&=0\\
\Div(\rho^2 u)&=f.
\end{aligned}
\end{equation} 
\end{theorem}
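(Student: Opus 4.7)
The plan is to cast the system~\eqref{original} as a differential inclusion amenable to convex integration, in the spirit of Tartar and De Lellis--Sz\'ekelyhidi. Introduce new unknowns $v = \rho u$ and $m = \rho^2 u$; the problem becomes to find a bounded map $(u, v, m) : \T^3 \to \R^9$ satisfying the \emph{linear} constraints
\[
\Div u = 0, \quad \Div v = 0, \quad \Div m = f,
\]
together with the \emph{pointwise} algebraic constraint $(u(x), v(x), m(x)) \in K$ a.e., where
\[
K = \{ (U, V, M) \in \R^9 : \exists\, \rho \in \R,\; V = \rho U,\; M = \rho^2 U \}.
\]
A measurable selection $\rho(x) = v(x) \cdot u(x) / |u(x)|^2$ (with $\rho = 0$ where $u = 0$) then recovers the original pair of~\eqref{original}.

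Next I would produce a bounded smooth \emph{strict subsolution}: a triple $(\bar u, \bar v, \bar m)$ obeying the linear system but lying in the interior of a suitable relaxation $K^{\mathrm{rel}} \supsetneq K$. The hypothesis that $\Div w = f$ for some bounded continuous $w$ makes $(\bar u, \bar v, \bar m) = (0, 0, w)$ an obvious candidate satisfying the linear constraints; one then shifts into the interior of $K^{\mathrm{rel}}$ by adding divergence-free corrections and exploiting the scaling freedom $\rho \mapsto \lambda \rho$ inherent in the definition of $K$.

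The core analytical step is the identification of the \emph{wave cone}
\[
\Lambda = \{ (U, V, M) \in \R^9 : \exists\, \xi \in \R^3 \setminus \{0\},\; \xi \cdot U = \xi \cdot V = \xi \cdot M = 0 \},
\]
consisting of plane-wave profiles $(U, V, M)\, h(\xi \cdot x)$ compatible with the homogeneous linear system, together with the verification that at every point of the relaxation strictly outside $K$ one can find a $\Lambda$-direction along which a rank-one-type segment with endpoints in (the closure of) $K^{\mathrm{rel}}$ passes. Granted this geometric fact, one iteratively adds highly oscillatory, compactly supported plane-wave perturbations in admissible $\Lambda$-directions, using an inverse-divergence operator acting on suitable potentials to preserve the linear constraints after localization by cutoffs. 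A Baire category argument in the complete metric space of bounded subsolutions endowed with the $L^2$ topology (equivalently, a direct iteration controlling the $L^2$-distance of the subsolutions to $K$) then produces a triple taking values in $K$ almost everywhere.

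I expect the hardest step to be the geometric one: the set $K$ is a four-dimensional, highly non-convex submanifold of $\R^9$ (parameterized by $U \in \R^3$ and the scalar $\rho$), so computing its $\Lambda$-convex hull, or at least producing enough admissible $T_N$-type configurations to generate an open relaxation surrounding the strict subsolution, calls for delicate algebra adapted to the specific algebraic constraints $V = \rho U,\ M = \rho^2 U$. Once such a rich hull is in hand, the oscillatory perturbation and Baire category machinery are by now essentially standard.
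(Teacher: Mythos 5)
Your framework coincides with the paper's: rewrite \eqref{original} as the linear system $\Div m=\Div u=0$, $\Div w=f$ for a matrix-valued map plus a pointwise inclusion into a constitutive set, identify the wave cone for the divergence operator (your condition $\xi\cdot U=\xi\cdot V=\xi\cdot M=0$ is exactly the paper's condition that the $3\times3$ matrix have rank at most $2$), build in-approximations by laminates, localize plane waves through a potential (the paper uses $\curl\Psi$), and pass to the limit by an iteration controlling the distance to the constraint set (the paper iterates directly in $L^1$ rather than invoking Baire category, but that is a cosmetic difference). Two small deviations worth noting: the paper works with the truncated sets $K_C$, requiring $\tfrac1C\le|u|\le C$ and $\tfrac1C\le\rho\le C$, and normalizes $|w|\ge1$ by adding a constant (harmless for $\Div w=f$); this sidesteps the degenerate selection $\rho=v\cdot u/|u|^2$ at points where $u=0$ that your unconstrained set $K$ would force you to handle.

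The genuine gap is precisely the step you defer: you never verify that $(0,0,w)$ actually lies in the rank-$2$ lamination convex hull of (a compact subset of) $K$, nor do you provide the quantitative, perturbation-stable version of this fact that the iteration needs. This is not a routine verification one can grant: it is where the specific algebra $V=\rho U$, $M=\rho^2 U$ enters, and the whole theorem would collapse if the hull were too small. The paper resolves it by an explicit two-stage splitting (its Lemma~\ref{geom1}): first $(0,0,w)=\tfrac12(-w,-w,w)+\tfrac12(w,w,w)$, a rank-$2$ (indeed rank-$1$ in the relevant difference) connection whose second state lies in $K_C$ with $\rho=1$; then the ansatz $(-w,-w,w)=\tfrac12(\rho_1v_1,v_1,\rho_1^2v_1)+\tfrac12(\rho_2v_2,v_2,\rho_2^2v_2)$ with $v_1=w$, $v_2=-3w$, which reduces to the algebraic system $-\rho_1+3\rho_2=2$, $-\rho_1^2+3\rho_2^2=-2$, solvable with $\rho_1,\rho_2>1$. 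On top of this one needs the stable version (the paper's Lemma~\ref{geom2}): for every matrix $\delta$-close to $K_C^{2lc}$ a laminate supported in $K_{C+\epsilon}$ whose spread is controlled by a modulus $h(\dist(\cdot,K_C))$; without such a statement your iteration has no mechanism to force the distance to $K$ to zero. Your suggestion that one can ``shift into the interior of $K^{\mathrm{rel}}$ \ldots exploiting the scaling freedom $\rho\mapsto\lambda\rho$'' does not substitute for these computations, since rescaling $\rho$ moves points along $K$ but gives no information about the hull of $K$ or about where $(0,0,w)$ sits relative to it. In short: right strategy, but the core geometric lemmas carrying the actual content of the theorem are missing.
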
 
Note once more that if the chain rule were valid, we would have $\Div(\rho^2 u)=0$. Therefore, $f$ is called the \emph{renormalisation defect}.

We begin by vastly relaxing the problem to the study of triplets $(m,u,w)$ of vectorfields that satisfy
\begin{equation}\label{linear}
\begin{aligned}
\Div(m)&=0\\
\Div(u)&=0\\
\Div(w)&=f.
\end{aligned}
\end{equation}
Setting 
\begin{equation}\label{nonlinear}
\begin{aligned}
K_{C}&:=\left\{(m,u,w)\in\R^{3\times3}: \frac{1}{C}\leq|u|\leq C \right.\\
&\left.\text{ and there is $\frac{1}{C}\leq\rho\leq C$ such that } m=\rho u, w=\rho^2u\right\},
\end{aligned}
\end{equation} 
we realise that a solution of~\eqref{linear} will be a solution of the original problem~\eqref{original} if $(m,u,w)(x)\in K_C$ for almost every $x\in\T^3$, for some $C>1$.

Our strategy is now roughly as follows: 
\begin{enumerate}
\item Start with the ``subsolution'' $U^0:=(0,0,w)$, where $w$ has the property $\Div w=f$.
\item For almost every $x\in\T^3$, represent $(0,0,w)$ as a convex combination of the form $\sum_{i=1}^N\lambda_i(m_i,u_i,w_i)$ in a way that is compatible with~\eqref{linear}, and such that the $(m_i,u_i,w_i)$ are in $K_C$.
\item Find a new triplet of functions $(m^1,u^1,w^1)$ that still satisfies~\eqref{linear}, and that approximately takes the value $(m_i,u_i,w_i)$ on a volume fraction $\lambda_i$ near the point $x$.
\item Iterate the process to obtain a sequence $(m^n,u^n,w^n)_{n\in\N}$ of functions satisfying~\eqref{linear}, and show that the sequence converges to a solution of~\eqref{original}.  
\end{enumerate}

Step 1 can be done by assumption. For Step 2, it surely should be clarified what is meant by ``compatibility'' of the convex combination with constraint~\eqref{linear}. Clearly, without a differential constraint like~\eqref{linear}, it is easy to find a continuous map $U^1:=(m^1,u^1,w^1):\T^3\to \R^{3\times 3}$ such that, close to any point $x\in\T^3$, $U^1$ takes values close to $U_i:=(m_i,u_i,w_i)$ on a set of volume fraction $\lambda_i$; suppose for simplicity that the $\lambda_i$ and $U_i$ are constant, then such a map could simply be given as (a suitable mollification of) the piecewise constant map
\begin{equation}
U(x)=U_n \quad\text{if $x_1\in\left(\sum_{i=1}^{n-1}\lambda_i, \sum_{i=1}^{n}\lambda_i\right)$}.
\end{equation}  
Now of course $\lambda_i$ and $U_i$ do depend on $x$, but as this is the case continuously, we can cover the domain by small cubes, consider $\lambda_i$ and $U_i$ constant on each such cube, and glue the various pieces together.

However, we want $U-(0,0,w)$ to be divergence-free, so that~\eqref{linear} remains valid. Suppose, at a given point $x\in\T^3$, $U^0(x)$ can be represented as a convex combination of only two matrices, $U^0(x)=\lambda U_1+(1-\lambda)U_2$. Does there exist a divergence-free matrix field $U$ that oscillates between $U_1$ and $U_2$? 

To this end, we define the \emph{wave cone} for the divergence-free condition as
\begin{equation}\label{lambdaconvex}
\Lambda:=\{\bar{U}\in\R^{3\times 3}: \text{there exists $\xi\neq 0$ such that $\Div [h(x\cdot \xi)\bar{U}]=0$ for any $h:\R\to\R$}\}.
\end{equation}
Since $\bar{U}\in\Lambda$ is equivalent to $h'(x\cdot\xi)\bar{U}\xi=0$ for all $h$ and a non-trivial $\xi$, we get that $\bar{U}\in\Lambda$ if and only if $\det\bar{U}=0$ or, in other words, the rank of $\bar{U}$ is at most 2. Therefore, there exists an oscillation between $U_1$ and $U_2$ if and only if $\rank(U_2-U_1)\leq 2$. 

Inductively, one can then find a condition for divergence-free oscillations between more than two matrices:

\begin{definition}\label{rk2laminate}
Suppose $\lambda_i>0$ for $i=1,\ldots,n$, $\sum_{i=1}^{n}\lambda_i=1$, and $U_i\in\R^{3\times3}$ for $i=1,\ldots,n$. The family of pairs $(\lambda_i,U_i)_{i=1}^n$ satisfies the (inductively defined) \term{$H_n$-condition} if
\begin{itemize}
\item[i)] $\rank(U_2-U_1)\leq2$ in the case $n=2$;
\item[ii)] after a relabeling of indices, if necessary, we have $\rank(U_2-U_1)\leq2$ and the family $(\tau_i,V_i)_{i=1}^{n-1}$ satisfies the $H_{n-1}$-condition, where
\begin{equation*}
\begin{aligned}
\tau_1=\lambda_1+\lambda_2, \hspace{0.2cm}\tau_i=\lambda_{i+1}\hspace{0.2cm}\text{for $i=2,\ldots,n-1$}
\end{aligned}
\end{equation*}
and
\begin{equation*}
\begin{aligned}
V_1=\frac{\lambda_1}{\tau_1}U_1+\frac{\lambda_2}{\tau_1}U_2, \hspace{0.2cm}V_i=U_{i+1}\hspace{0.2cm}\text{for $i=2,\ldots,n-1$}
\end{aligned}
\end{equation*}
in the case $n>2$.
\end{itemize}
Moreover we adopt the convention that every pair of the form $(1,U)$ satisfies the $H_1$-condition.

\end{definition}  

With this definition, we can thus give a precise meaning to Step (2) above: For almost every $x\in\T^3$, represent $U^0:=(0,0,w(x))$ as a convex combination of the form $\sum_{i=1}^N\lambda_i(x)U_i(x)$, where $(\lambda_i(x),U_i(x))_{i=1}^N$ satisfy the $H_N$-condition, and $U_i(x)\in K_C$.

That this can actually be done is the content of the following lemma. Note that, since $w$ is assumed bounded and continuous, we may always assume $|w|\geq 1$, as adding a constant will not affect the property $\Div w=f$.
\begin{lemma}\label{geom1}
Let $U=(0,0,w)\in\R^{3\times3}$ such that $|w|\geq1$. Then there exist $(\lambda_i,{U_i})_{i=1}^3$ satisfying the $H_3$-condition, such that
\begin{equation*}
U=\sum_{i=1}^3\lambda_iU_i,
\end{equation*}
and a number $C>1$ such that
\begin{equation*}
U_i\in K_{C} \quad\text{for $i=1,2,3$.}
\end{equation*}
\end{lemma}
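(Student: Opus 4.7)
The key observation is that the target matrix $U = (0, 0, w)$ has rank one, because all three of its rows lie on the line spanned by $v := w/|w|$. My plan is to produce an $H_3$-decomposition that preserves this rank-one structure: I will search for $U_1, U_2, U_3$ such that every row of every $U_i$ is a scalar multiple of the fixed unit vector $v$. Since the wave cone $\Lambda$ in~\eqref{lambdaconvex} consists exactly of the matrices of rank at most two, any two matrices whose rows all lie along $v$ have a rank-one (hence rank-$\leq 2$) difference, and the same is true of any convex combination of such matrices; therefore the recursive rank-$2$ constraints in Definition~\ref{rk2laminate} will be vacuous for this ansatz and the whole problem reduces to one-dimensional linear algebra along $v$.

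Concretely I would fix three distinct positive numbers $\rho_1, \rho_2, \rho_3$, e.g.\ $\rho_i = i$, set $u_i := \alpha_i v$, $m_i := \rho_i u_i$, $w_i := \rho_i^2 u_i$, and treat $\alpha_i \in \mathbb{R}\setminus\{0\}$ as unknowns. By construction $U_i = (m_i, u_i, w_i) \in K_C$ for a suitable $C$ as soon as the $|\alpha_i|$ are bounded away from $0$ and from $\infty$. Projecting the identity $\sum_i \lambda_i U_i = U$ onto $v$ turns the decomposition problem into the three scalar equations
\[
\sum_{i=1}^3 \lambda_i \alpha_i = 0,\qquad \sum_{i=1}^3 \lambda_i \rho_i \alpha_i = 0,\qquad \sum_{i=1}^3 \lambda_i \rho_i^2 \alpha_i = |w|.
\]
Letting $\beta_i := \lambda_i \alpha_i$, this is a $3\times 3$ Vandermonde system, uniquely solvable since the $\rho_i$ are distinct, and the unique solution has every $\beta_i$ nonzero with $|\beta_i|$ comparable to $|w|$.

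To close the argument I would fix any convex weights $\lambda_i \in (0, 1)$ with $\sum_i \lambda_i = 1$ (the symmetric choice $(\tfrac14, \tfrac12, \tfrac14)$ will do) and set $\alpha_i := \beta_i/\lambda_i$; the hypothesis $|w| \geq 1$ then guarantees the $|\alpha_i|$ stay uniformly away from zero, while the fixed size of $|w|$ gives the upper bound. Together with $\rho_i \in [1/3, 3]$ this yields a $C>1$ with $U_i \in K_C$. I do not expect a serious obstacle beyond the initial structural insight: once the rank-one ansatz aligned with $v$ is chosen, the $H_3$-condition is automatic and only the solvability of the Vandermonde system has to be verified, which is immediate from the distinctness of the $\rho_i$. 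The only delicate point is to ensure that the resulting $\alpha_i$ are all nonzero so that $|u_i|$ is bounded below; this is precisely what the lower bound $|w|\geq 1$, combined with the Vandermonde nondegeneracy, supplies.
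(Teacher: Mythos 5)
Your proof is correct, but it organizes the decomposition differently from the paper. The paper splits $U=(0,0,w)$ in two nested binary steps: first $(0,0,w)=\tfrac12(-w,-w,w)+\tfrac12(w,w,w)$, where the second state is already in $K_C$ (take $\rho=1$, $u=w$), and then resolves $(-w,-w,w)$ into two admissible states with \emph{prescribed velocities} $v_1=w$, $v_2=-3w$, solving the quadratic system~\eqref{rhosystem} for the densities and checking it has roots $\rho_1,\rho_2>1$; this binary tree mirrors the inductive definition of the $H_3$-condition literally. You instead make a one-shot ansatz of three states $U_i=(\rho_i\alpha_i v,\,\alpha_i v,\,\rho_i^2\alpha_i v)$ with the \emph{densities fixed} ($\rho_i=1,2,3$) and the velocity amplitudes unknown, so that $\sum_i\lambda_iU_i=U$ becomes a linear Vandermonde system for $\beta_i=\lambda_i\alpha_i$; explicitly, with $\lambda=(\tfrac14,\tfrac12,\tfrac14)$ one gets $\beta=(\tfrac12|w|,-|w|,\tfrac12|w|)$, hence $\alpha_i=\pm2|w|$ and $U_i\in K_C$ for $C=\max(3,2|w|)$, using $|w|\geq1$ for the lower bound on $|u_i|$. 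Both arguments rest on the same structural fact, which you state explicitly and the paper uses tacitly: every state involved has all rows parallel to $w$, so all relevant differences (including the one with the intermediate barycentre $V_1$ in Definition~\ref{rk2laminate}) have rank at most one, and the $H_3$-condition is automatic. What your route buys is that everything is linear algebra --- no quadratic system, no positivity check on its roots, and a transparent verification of the laminate condition via the divided-difference formula $\beta_i=|w|/\prod_{j\neq i}(\rho_i-\rho_j)\neq0$; what the paper's route buys is a construction whose nested form matches the recursion in the $H_n$-definition and anticipates the more general Lemma~\ref{geom2}. Your constant $C$ depends on $|w|$ exactly as in the paper (there $C\geq|w|$), which is all that is needed later, since the lemma is applied to the bounded field $U^0$.
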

\begin{proof}
We split $(0,0,w)$ into
\begin{equation*}
(0,0,w)=\frac{1}{2}\left(-w,-w,w\right)+\frac{1}{2}\left(w,w,w\right).
\end{equation*}  
If we call the matrices corresponding to the two triplets on the right hand side $U_-$ and $U_+$, respectively, we first observe that $U_-$ and $U_+$ are rank-2 connected since $(U_--U_+)e_3=0$. Secondly, $U_+\in K_{C}$ for any $C$ such that
\begin{equation*}
C\geq|w|.
\end{equation*}

Next, let us further decompose $U_-$. We make the ansatz 
\begin{equation}\label{decomp}
\left(-w,-w,w\right)=\frac{1}{2}(\rho_1v_1,v_1,\rho_1^2v_1)+\frac{1}{2}(\rho_2v_2,v_2,\rho_2^2v_2)
\end{equation}
with
\begin{equation}\label{decomp2}
v_1=w,\hspace{0.2cm}v_2=-3w.
\end{equation}
Then clearly~\eqref{decomp} is a rank-2 decomposition (in fact even rank-1), and~\eqref{decomp} and~\eqref{decomp2} result in the conditions
\begin{equation}\label{rhosystem}
\begin{aligned}
-\rho_1+3\rho_2&=2\\
-\rho_1^2+3\rho_2^2&=-2.
\end{aligned}
\end{equation}
A direct calculation shows that these requirements can be satisfied by two numbers $\rho_1,\rho_2>1$, and the proof is finished.
\end{proof}  

We proceed to Step (3). This consists in finding a continuous divergence-free map $U^1=(m^1,u^1,w^1):\T^3\to\R^{3\times 3}$ that approximately ``recovers'' the values $U_i(x)$ from the previous step. Given a family $(\lambda_i,U_i)_{i=1}^N$ satisfying the $H_N$-condition, we call the probability measure 
\begin{equation}
\nu:=\sum_{i=1}^N\lambda_i\delta_{U_i}\in\mathcal{P}(\R^{3\times 3})
\end{equation} 
a \emph{rank-2 laminate}, and if the $\lambda_i$ and $U_i$ depend on $x$, then we obtain a probability measure $\nu_x$ that depends on $x\in\T^3$. (Such measures are often called \emph{parametrised probability measures} or \emph{Young measures}).

If $K\in\R^{3\times 3}$ is compact, we denote by $K^{2lc}\supset K$ the \emph{rank-2 lamination convex hull} of $K$, that is, the set of barycentres of rank-2 laminates supported on $K$. In other words, if $U\in K^{2lc}$, then $U$ is the convex combination of some matrices in $K$ that satisfy an $H_N$-condition.

The recovery lemma can then be stated as follows:
\begin{lemma}\label{approx}
Let $K\subset\R^{3\times 3}$ be compact and $(\nu_x)_{x\in\Omega}$ be a family of probability measures such that
\begin{itemize}
\item[a)] the measure $\nu_x$ is a rank-2 laminate for almost every $x\in\Omega$,
\item[b)] $\supp\nu_x\subset K$ for almost every $x$.
\end{itemize}
Assume further that $\psi\in C(\R^{3\times3};\R)$ is a non-negative function that vanishes on $K$. Then the barycentre $\bar{\nu}_x=\int_K V d\nu_x(V)$ is well-defined for almost every $x\in\Omega$, and for every $\epsilon>0$ there exists a matrix-valued function $U$ such that
\begin{itemize}
\item[i)] $\Div U=\Div\bar{\nu}$ \hspace{0.2cm} in the sense of distributions,
\item[ii)] \begin{equation*}
\int_{\Omega}\psi(U(x))dx<\epsilon,
\end{equation*}
\item[iii)] \begin{equation*}
\norm{\dist(U(x),K^{2lc})}_{L^{\infty}(\Omega)}<\epsilon,
\end{equation*}
\item[iv)]
\begin{equation}\label{expectationclose}
\int_{\Omega}\left|U(x)-\bar{\nu}_x\right|dx<\int_{\Omega}\int_{\R^{3\times3}}\left|V-\bar{\nu}_x\right|d\nu_x(V)dx+\epsilon.
\end{equation}
\end{itemize}
Moreover, if $\bar{\nu}\in C(\bar{\Omega})$, then $U$ can be chosen to satisfy $U\in C(\bar{\Omega})$.
\end{lemma}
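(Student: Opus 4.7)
The approach will be induction on the depth $N$ of the laminate, mirroring the recursive structure of the $H_N$-condition in Definition~\ref{rk2laminate}. The base case $N=1$ is immediate: $\nu_x = \delta_{\bar\nu_x}$ is a Dirac supported in $K$, so $U := \bar\nu$ satisfies (i)--(iv) trivially, and continuity is inherited from $\bar\nu$.

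For the inductive step, given a laminate $\nu_x = \sum_{i=1}^N \lambda_i(x)\delta_{U_i(x)}$ of order $N$, the $H_N$-condition (after relabelling) supplies an auxiliary laminate $\tilde\nu_x := \sum_{i=1}^{N-1}\tau_i(x)\delta_{V_i(x)}$ of order $N-1$ with the same barycentre $\bar\nu_x$, together with matrices $U_1, U_2$ such that $U_2 - U_1 \in \Lambda$ and $V_1 = (\lambda_1/\tau_1) U_1 + (\lambda_2/\tau_1) U_2$. The plan is first to apply the inductive hypothesis to $\tilde\nu$ (with the function $\psi$ replaced by one vanishing on the enlarged support $K \cup \{V_1(x)\}_{x}$), obtaining an intermediate divergence-matching field $\tilde U$ close in $L^1$ to $\bar\nu$; then to refine $\tilde U$ by a localised rank-$\le 2$ oscillation in the region $\{\tilde U \approx V_1\}$ so that the final field alternates between $U_1$ and $U_2$ in the correct proportions rather than clustering near $V_1$.

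The key building block is the rank-$\le 2$ oscillation made possible by the wave-cone definition \eqref{lambdaconvex}: since $U_2 - U_1 \in \Lambda$ there exists $\xi \ne 0$ with $(U_2-U_1)\xi = 0$, so $x \mapsto h(x\cdot \xi)(U_2 - U_1)$ is divergence-free for every smooth $h$. Taking $h$ to be a mollification of the one-periodic mean-zero square wave with values $-\lambda_2/\tau_1$ and $\lambda_1/\tau_1$, the profile $V_1 + h(Mx\cdot\xi)(U_2 - U_1)$ oscillates between $U_1$ and $U_2$ with the desired volume fractions. Localisation proceeds by covering the relevant region with small cubes $Q_k$ on which the parameters $\lambda_i, U_i, V_i, \xi$ are nearly constant, multiplying the oscillation by smooth cutoffs $\phi_k$ compactly supported in $Q_k$, and correcting the cutoff-induced divergence $h(Mx\cdot\xi_k)(U_2 - U_1)\nabla\phi_k$ via a Bogovskii-type potential. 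The mean-zero property of $h$ makes this correction of size $\Ocal(M^{-1})$ in $L^\infty$, so choosing $M$ sufficiently large preserves $\Div U = \Div\bar\nu$ exactly (giving (i)) and keeps the corrections below the threshold required for (iii), while (ii) and (iv) follow because the pointwise values of $U$ lie increasingly close to $\{U_1, U_2, V_2, \ldots, V_{N-1}\} \subset K^{2lc}$.

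The main obstacle is the delicate balancing of scales across inductive steps. The cube size at stage $N$ must be small enough for $\tilde U$ to look essentially piecewise constant on each cube, yet the oscillation frequency $M$ must be large compared to the reciprocal cube size so that the oscillation averages out and the Bogovskii correction is negligible; one must moreover verify that the corrections added at stage $N$ do not destroy the estimates inherited from stage $N-1$, which fixes an order-of-operations and the distribution of tolerances. Continuity of $U$ when $\bar\nu \in C(\bar\Omega)$ is propagated through the induction by using mollified cutoffs and continuous representatives for the splitting parameters; the latter requires a partition-of-unity argument together with a continuous selection of the rank-$\le 2$ direction $\xi(x)$, which is possible because the $\lambda_i$ and $U_i$ depend continuously on $x$ (a consequence of $\bar\nu \in C(\bar\Omega)$ and the hierarchical construction of the laminate).
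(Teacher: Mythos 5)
Your proposal follows essentially the route the paper itself sketches (and defers to~\cite{CGSW1} for details): nested oscillations along rank-2 directions supplied by the wave cone~\eqref{lambdaconvex} --- your induction on the order of the laminate is exactly this --- combined with a cube-by-cube localisation where the parameters are nearly constant. The one place where your write-up is too quick is the correction of the cutoff error: the divergence of $\phi_k\, h(Mx\cdot\xi)(U_2-U_1)$ equals $h(Mx\cdot\xi)(U_2-U_1)\nabla\phi_k$, which is of size $O(1)$ in every $L^p$, so a bare Bogovskii-type estimate yields an $O(1)$ correction, not $O(M^{-1})$; the mean-zero property of $h$ alone does not do the work until you exploit the oscillation explicitly, e.g.\ by writing $h(Mx\cdot\xi)=\frac{1}{M|\xi|^2}\,\xi\cdot\nabla\bigl[H(Mx\cdot\xi)\bigr]$ with $H'=h$ and $H$ bounded and integrating by parts, or --- as the paper indicates --- by writing the (divergence-free) oscillation as $\curl\Psi$ with a potential $\Psi$ of size $O(M^{-1})$ and localising via $\curl(\phi_k\Psi)$, which is exactly divergence-free and makes the cutoff errors pointwise $O(M^{-1})$. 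With that standard repair your construction gives (i)--(iv) as intended. A minor formulation point: in your inductive step the auxiliary laminate charges the points $V_1(x)$, which lie in $K^{2lc}$ but not in $K$, so the induction hypothesis must be stated for laminates supported in an arbitrary compact set (or with $K$ replaced by a compact set containing $K^{2lc}$); this is harmless but needed for the hypothesis to apply as you use it.
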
 

The proof of this lemma is largely standard. By translation and localisation arguments, one reduces to the situation where $\nu$ is independent of $x$, and uses nested oscillations along rank-2 lines, as provided by~\eqref{lambdaconvex}. Note that all of this requires a localisation technique, i.e.\ a way to replace a divergence-free matrix field with another one of compact support, such that it is close to the original one on its support. It would be tempting to multiply a given field $U$ by a compactly supported cutoff function $\phi:\T^3\to\R$, but then $\phi U$ might not be divergence-free; instead, since $\Div U=0$, there exists a potential $\Psi:\T^3\to\R^{3\times 3}$ such that $\curl\Psi=U$, and one localises considering $\tilde{U}:=\curl(\phi\Psi)$. We omit details.

In this way, we get a new matrix field $U^1=(m^1,u^1,w^1)$ that is much closer to $K_C$ that the original $U^0=(0,0,w)$. However, as Lemma~\ref{approx} still unavoidably contains errors of order $\epsilon$, we have to repeat the procedure (Step (4)). Since $U^1$ is no longer of the special form $(0,0,w)$, we need a more general version of Lemma~\ref{geom1}:

\begin{lemma}\label{geom2}
Let $\epsilon>0$ and $\tilde{C}>1$. There exists a strictly increasing continuous function $h:[0,\infty)\to[0,\infty)$, depending only on $\tilde{C}$, with $h(0)=0$, and a number $\delta>0$, depending only on $\tilde{C}$ and $\epsilon$, such that for every $1<C<\tilde{C}-\epsilon$ and every $U\in\R^{3\times3}$ such that $\dist(U,K^{2lc}_{C})<\delta$, there exists a rank-2 laminate $\nu=\sum_{i=1}^n\lambda_i\delta_{U_i}$ such that
\begin{equation}\label{expectation}
U=\sum_{i=1}^n\lambda_iU_i,
\end{equation}
\begin{equation}\label{L1estimate}
\sum_{i=1}^n\lambda_i|U_i-U|\leq h\left(\dist(U,K_{C})\right),
\end{equation}
and
\begin{equation*}
\supp\nu\subset K_{C+\epsilon}.
\end{equation*}
\end{lemma}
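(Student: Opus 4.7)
The plan is to build Lemma~\ref{geom2} on top of Lemma~\ref{geom1} in two stages: first promote the latter to a parametric family indexed by a base point in $K_C$, then use the $\epsilon$-slack and the slight enlargement to $K_C^{2lc}$ to handle $U$ that is close to $K_C^{2lc}$ rather than to $K_C$ itself. Throughout, compactness of the relevant parameter set (induced by the bound $C\leq\tilde C-\epsilon$) underwrites the uniform continuity of the construction, so that the modulus $h$ ends up depending only on $\tilde C$.

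In the first stage, given a base point $U^*=(\rho^*u^*,u^*,(\rho^*)^2 u^*)\in K_C$, I would replay the two-level splitting from the proof of Lemma~\ref{geom1} but with the fixed matrix $(0,0,w)$ replaced by the generic offset $U-U^*$. The outer split is performed along the rank-$1$ direction dictated by the $w$-coordinate of the offset, and the inner split is carried out with $\rho_1,\rho_2$ determined by a polynomial system analogous to~\eqref{rhosystem}. All atoms, weights, and the total cost depend continuously on $(U^*,U)$, collapse to the trivial laminate $\delta_{U^*}$ when $U=U^*$, and stay inside $K_{C+\epsilon}$ whenever $|U-U^*|$ is below a threshold determined by $\tilde C$ and $\epsilon$. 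Taking a uniform supremum over the compact parameter set $[1/\tilde C,\tilde C]\times\{v\in\R^3:1/\tilde C\leq|v|\leq\tilde C\}$ yields a continuous modulus $h_0$, depending only on $\tilde C$, that controls the cost via $h_0(|U-U^*|)$.

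In the second stage, given $U$ with $\dist(U,K_C^{2lc})<\delta$, I would first pick a nearest $U^*\in K_C$; when $\dist(U,K_C)$ is small enough to fall inside the validity range of Stage~1, I invoke Stage~1 directly and obtain a laminate of cost $h_0(\dist(U,K_C))$. In the remaining regime, where $\dist(U,K_C)$ is bounded below by a positive constant but still $\dist(U,K_C^{2lc})<\delta$, I would start from any rank-$2$ laminate $\bar\nu=\sum\bar\lambda_j\delta_{\bar U_j}$ on $K_C$ that witnesses $\bar U\in K_C^{2lc}$ with $|U-\bar U|<\delta$, and attach to each atom $\bar U_j$ a Stage~1 correction to shift the barycentre from $\bar U$ to $U$. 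The combined nested laminate satisfies the $H_N$-condition by concatenating the outer rank-$2$ connections with the inner ones, and stays inside $K_{C+\epsilon}$ provided $\delta$ is chosen small enough in terms of $\epsilon$ and $\tilde C$.

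The main obstacle will be the preservation of the $H_N$-condition in the second stage, since the sum of two rank-$\leq 2$ matrices need not be rank-$\leq 2$, so a naive perturbation destroys the lamination structure. The remedy is to route the correction exclusively along the rank-$1$ tangent directions of $K_C$ (varying $\rho$ and varying $u$ at the base point), which the construction of Lemma~\ref{geom1} already respects, and to distribute the offset $U-\bar U$ across atoms in a way that keeps successive differences of rank at most $2$. This bookkeeping is what forces the choice $\delta\ll\epsilon$ and explains why $\delta$ must depend on both $\tilde C$ and $\epsilon$, while $h$ depends only on $\tilde C$; once it is arranged, the bounds~\eqref{expectation} and~\eqref{L1estimate}, together with $\supp\nu\subset K_{C+\epsilon}$, follow from the explicit polynomial dependence of the Stage~1 construction.
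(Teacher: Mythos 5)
First, a caveat: the paper does not actually prove Lemma~\ref{geom2}; it explicitly defers to~\cite{CGSW1}, so your proposal can only be measured against what such a proof must accomplish. Some of your skeleton is fine: translating all atoms of a laminate by one and the same matrix preserves the $H_N$-condition (which only involves differences and partial barycentres), and refining an atom by a further laminate with the same barycentre again gives a finite-order laminate; in fact the ``main obstacle'' you single out is not one at all, since every element of $K_C$ is the rank-one matrix $(\rho u,u,\rho^2u)$, so any two elements are automatically rank-2 connected and the common-shift device already preserves the lamination structure.

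The genuine gap is your Stage~1. Lemma~\ref{geom1} decomposes matrices of the very special form $(0,0,w)$ with $|w|\geq 1$, and both its outer split and the system~\eqref{rhosystem} are tailored to that form. What Stage~1 needs is that an \emph{arbitrary} offset $V=U-U^*\in\R^{3\times3}$ --- nine parameters, of which only four are tangent to the four-dimensional family $K_C$ --- is the barycentre of a laminate supported in $K_{C+\epsilon}$ with spread controlled by a modulus of $|V|$. This is a quantitative local-surjectivity statement for barycentres of laminates on a curved family of rank-one states: the five ``normal'' directions are reachable only at second order (spread of order $\sqrt{|V|}$), one must check that all such directions and both signs are attainable, and that the required parameter room fits into the $\epsilon$-enlargement even at boundary points $\rho^*\in\{1/C,C\}$, $|u^*|\in\{1/C,C\}$. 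None of this is a ``replay'' of Lemma~\ref{geom1}; it is exactly the geometric core the paper says it omits, and your proposal asserts it rather than proves it.

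There is also a quantitative flaw in the bookkeeping: the statement requires $h$ to depend only on $\tilde C$, while $\delta$ may depend on $\epsilon$. Your crossover radius $r_0$ between Stage~1 and Stage~2 inevitably depends on $\epsilon$ (Stage-1 atoms must remain in $K_{C+\epsilon}$), whereas in the Stage-2 regime you bound the first moment only by a diameter constant $M(\tilde C)$: an arbitrary laminate witnessing $\bar U\in K^{2lc}_C$ can have first moment of order one even when $\bar U$ lies in $K_C$, e.g.\ $(\rho u,u,\rho^2u)=\tfrac12(\rho u_1,u_1,\rho^2u_1)+\tfrac12(\rho u_2,u_2,\rho^2u_2)$ with $u=\tfrac{u_1+u_2}{2}$ and $u_1,u_2$ far apart. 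So for $\dist(U,K_C)$ just above $r_0(\epsilon)$ your construction only yields cost $\approx M$, and since $r_0(\epsilon)\to0$ as $\epsilon\to0$, no fixed continuous $h$ with $h(0)=0$ can majorize this as~\eqref{L1estimate} demands. Closing this requires choosing the witnessing laminate with first moment controlled by a fixed modulus of $\dist(U,K_C)$, i.e.\ essentially a quantitative description of $K^{2lc}_C$ --- again the omitted core of~\cite{CGSW1}, not a consequence of Lemma~\ref{geom1}.
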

The proof of this lemma is actually at the core of the whole construction, but we shall not discuss it here since the proof of Lemma~\ref{geom1} hopefully already gives a taste for the geometric arguments involved.

To finish up the proof, we need to define a sequence $(U^n)_{n\in\N}$ of divergence-free matrix fields whose average distance to $K_C$ converges to zero, for some $C>1$.

To this end, let $C_0>1$ be as required by Lemma~\ref{geom1} applied to $U^0(x)$ for \emph{all} $x\in\bar{\Omega}$ (this is possible since $U^0$ is bounded). Next, pick a sequence $(C_n)_{n\geq0}$ that is strictly increasing such that $C_n\nearrow C_0+1=:C$ as $n\to\infty$. We also set $\epsilon_n:=C_{n+1}-C_n$. Then, $(\epsilon_n)$ is a sequence of positive numbers converging to zero.

By Lemma~\ref{geom1} there exists for almost every $x\in\T^3$ a rank-2 laminate $\nu^0_x$ of finite order whose expectation is $U^0(x)$ and whose support is contained in $K_{C_0}$. This completes the definition of $U^0$ and $\nu^0$.

Suppose now that $U^n$ and $\nu^{n}$ have already been constructed for some $n\geq0$ in such a way that $\supp\nu^n\subset K_{C_n}$ and~\eqref{linear},~\eqref{expectation},~\eqref{L1estimate} are satisfied, that is:
\begin{equation*}
\Div(U^n)=(0,0,f)^T,
\end{equation*}
\begin{equation}\label{nexpectation}
U^{n}(x)=\bar{\nu}_x^n,
\end{equation}
\begin{equation*}
\int_{\R^{3\times3}}|V-U^n(x)|d\nu_x^n(V)\leq h\left(\dist(U^n,K_{C_{n-1}})\right).
\end{equation*}
The last estimate is claimed only for $n\geq1$. By Lemma~\ref{geom2}, where we set $\epsilon=\epsilon_{n+1}$ and $\tilde{C}=C+1$, there exists $\delta_{n+1}=\delta(\epsilon_{n+1})$ such that whenever
\begin{equation*}
\dist(U,K^{2lc}_{C_n})<\delta_{n+1},
\end{equation*}
then there exists a rank-2 laminate whose expectation is $U$ and whose support is contained in
\begin{equation}\label{n+1}
K_{C_n+\epsilon_{n+1}}\subset K_{C_{n+1}}.
\end{equation}
Therefore we apply Lemma~\ref{approx} to $(\nu^n_x)$ with $K_{C_n}$, $\epsilon=\delta_{n+1}$, and 
\begin{equation*}
\psi=h\left(\dist(\frarg,K_{C_n})\right).
\end{equation*}
This yields a matrix field $U_{n+1}$ satisfying 
\begin{equation*}
\Div(U_{n+1})=\Div\left(\bar{\nu}^n_x\right)=\Div(U_n)=(0,0,f)^T,
\end{equation*}
\begin{equation}\label{Kapprox}
\int_{\Omega}h\left(\dist(U_{n+1}(x),K_{C_n})\right)dx<\delta_{n+1},
\end{equation}
and
\begin{equation}\label{distn+1}
\norm{\dist(U_{n+1}(x),K_{C_n}^{2lc})}_{L^{\infty}(\Omega)}<\delta_{n+1}.
\end{equation}
Therefore, by~\eqref{n+1}, we can indeed find, for every $x$, a rank-2 laminate $\nu^{n+1}_x$ with support in $K_{C_{n+1}}$ satisfying~\eqref{expectation} and~\eqref{L1estimate}. This completes the construction of the sequence $(U^n)$.

It is then not hard to show that $U^n\to U$ in $L^1(\T^3;\R^{3\times 3})$, and that consequently $U(x)\in K_C$ for almost every $x$. Note that, in the construction of our sequence $(U^n)$, the admissible error $\epsilon$ for Lemma~\ref{approx} is chosen as $\delta_{n+1}$, which typically is much smaller than the previous $\epsilon$. This choice forces $U^{n+1}-U^n$ to oscillate at a much higher frequency that $U^n$ itself. This separation of frequencies, as one might call it, is typical of any convex integration type argument.

\section{Onsager's Conjecture}\label{onsagersec}
\subsection{Onsager's conjecture on $\T^3$}
Consider again the incompressible Euler equations,
\begin{equation}
\begin{aligned}
\partial_t u+(u\cdot\nabla)u+\nabla p&=0\\
\Div u&=0.
\end{aligned}
\end{equation}   
Here, $u:\T^3\times[0,T]\to\R^3$ denotes the velocity and $p:\T^3\times[0,T]\to\R$ the pressure of an ideal (i.e.\ inviscid) incompressible fluid, like approximately water. Multiplying this system by its velocity and integrating in space gives 
\begin{equation}
\frac{d}{dt}\int_{\T^3}|u|^2dx+\int_{\T^3}(u\cdot\nabla)u\cdot udx+\int_{\T^3}u\cdot\nabla p dx=0.
\end{equation}
The last integral vanishes since $u$ is divergence-free, and for the middle one we compute, by integration by parts,
\begin{equation}
\int_{\T^3}(u\cdot\nabla)u\cdot udx=\int_{\T^3}u_j\partial_ju_iu_i dx=-\int_{\T^3}\partial_ju_ju_iu_i dx-\int_{\T^3}u_ju_i\partial_ju_i dx,
\end{equation}
and since the first integral on the right hand side again vanishes due to the divergence-free condition, it follows that $\int_{\T^3}(u\cdot\nabla)u\cdot udx=0$, so we are left with the conservation of energy,
\begin{equation}\label{globalen}
\frac{d}{dt}\int_{\T^3}|u|^2dx=0.
\end{equation}
In fact, a more careful computation (that omits integration in space) yields the \emph{local energy equality}
\begin{equation}\label{localen}
\partial_t\frac{|u|^2}{2}+\left(\left(\frac{|u|^2}{2}+p\right)u\right)=0,
\end{equation}
which is the precise analogue (for $\eta(u)=\frac12|u|^2$) of the renormalised equation~\eqref{renorm} in the context of transport equations.

Note once more that the chain rule (or, on a related note, integration by parts) requires $u$ to have at least one full derivative. On the other hand, there are examples known of weak solutions that grossly violate~\eqref{localen} and even~\eqref{globalen}, the first one being due to Scheffer~\cite{Scheffer}, whose solutions are no better than $L^2_{loc}$. The question thus arises whether there exists a threshold regularity that distinguishes dissipative from conservative solutions. In 1949, L.~Onsager made the following conjecture~\cite{ON}:
\begin{conjecture}
\begin{itemize}
\item[a)] If $u$ is a weak solution of the incompressible Euler equations with $u\in C^{\alpha}$ for an $\alpha>\frac{1}{3}$, then the energy is conserved.
\item[b)] For every $\alpha<\frac{1}{3}$ there exists a weak solution $u\in C^\alpha$ that dissipates energy.
\end{itemize}
\end{conjecture}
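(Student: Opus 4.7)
The conjecture splits into opposite directions that call for very different machinery. Part (a) (the conservative direction) sits squarely inside the Constantin-E-Titi framework of Section~\ref{renormsec}, while part (b) (the dissipative direction) asks for an explicit construction and falls within the convex-integration philosophy sketched in Section~\ref{chainrule}, although at a vastly higher level of difficulty.

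For part (a), my plan is to run the Constantin-E-Titi argument with $u$ playing simultaneously the role of transported quantity and transporting field. Mollify the momentum equation by $\chi_\epsilon$, multiply by $u^\epsilon$, and use $\Div u^\epsilon = 0$ to turn the pressure contribution into a pure divergence. Adding and subtracting $u_i^\epsilon u_j^\epsilon \partial_j u_i^\epsilon$ inside the convective term (and using $\Div u^\epsilon=0$ once more) produces the regularised local balance
\begin{equation*}
\partial_t \tfrac{|u^\epsilon|^2}{2}
+ \Div\!\left(\bigl(\tfrac{|u^\epsilon|^2}{2} + p^\epsilon\bigr)u^\epsilon\right)
+ \Div\!\bigl(u_i^\epsilon\bigl((u_i u_j)_\epsilon - u_i^\epsilon u_j^\epsilon\bigr)\bigr)
= \partial_j u_i^\epsilon\,\bigl((u_i u_j)_\epsilon - u_i^\epsilon u_j^\epsilon\bigr).
\end{equation*}
The third term on the left is the divergence of an object tending to zero in $L^1$, hence vanishes distributionally. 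The genuine obstruction is the commutator on the right, for which I would use the Constantin-E-Titi identity
\begin{equation*}
(u_i u_j)_\epsilon - u_i^\epsilon u_j^\epsilon
= \int_{B_\epsilon(0)} \chi_\epsilon(y)\bigl(u_i(x-y)-u_i(x)\bigr)\bigl(u_j(x-y)-u_j(x)\bigr)\,dy - (u_i-u_i^\epsilon)(u_j-u_j^\epsilon).
\end{equation*}
For $u\in C^\alpha$ both pieces are pointwise $O(\epsilon^{2\alpha})$, while standard mollifier estimates give $\|\nabla u^\epsilon\|_{L^\infty} = O(\epsilon^{\alpha-1})$. Hence the right-hand side is $O(\epsilon^{3\alpha-1})$ in $L^1_{x,t}$, which vanishes precisely when $\alpha > 1/3$. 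Passing to the limit yields both~\eqref{localen} and, after integration in space, the global conservation~\eqref{globalen}.

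For part (b), my plan is an iterative convex-integration scheme for the \emph{Euler-Reynolds system}
\begin{equation*}
\partial_t u_q + \Div(u_q \otimes u_q) + \nabla p_q = \Div \mathring R_q,\qquad \Div u_q = 0,
\end{equation*}
building a sequence $(u_q,p_q,\mathring R_q)$ whose Reynolds stress decays while the iterates converge in $C^\alpha$ for any prescribed $\alpha<1/3$. At each stage I would add a high-frequency perturbation $w_{q+1}$ of amplitude $\delta_{q+1}^{1/2}$ and frequency $\lambda_{q+1}$, designed so that $w_{q+1}\otimes w_{q+1}$ cancels $\mathring R_q$ to leading order. The building blocks of choice are Isett's \emph{Mikado flows}: stationary Euler solutions supported on disjoint parallel pipes, whose geometric separation kills the self-transport errors that previously forced schemes based on Beltrami waves to stop well below the threshold. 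To produce a solution with a prescribed dissipative energy profile, the perturbation amplitude is modulated along the regularised flow by a partition of unity, in the spirit of the Buckmaster-De Lellis-Székelyhidi template.

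The true obstacle is the quantitative bookkeeping in part (b): the pair of geometric sequences $(\delta_q,\lambda_q)$ must satisfy a delicate double-exponential balance that simultaneously controls the new Reynolds stress, a hierarchy of spatial derivatives of $u_q$, and the material derivative along the coarse-grained flow, while leaving enough slack to recover any Hölder exponent strictly below $1/3$ in the limit $u=\lim_q u_q$. Once this inductive propagation of estimates is in place, convergence of $u_q$ and the identification of the limit as a genuine weak Euler solution are standard. By contrast, part (a) is essentially a bookkeeping exercise once the commutator identity is in hand.
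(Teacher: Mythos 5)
Your treatment of part (a) is essentially the paper's own argument. The paper proves the conservative direction as Theorem~\ref{eulerthm}, via exactly the Constantin--E--Titi mollify-and-commute scheme you run, with the velocity playing the role of both the transported and the transporting quantity in the proof of Theorem~\ref{transportthm}; the pointwise decomposition of $(u_iu_j)_\epsilon-u_i^\epsilon u_j^\epsilon$ you invoke is the identity displayed in Section~\ref{renormsec}, and your H\"older bookkeeping ($O(\epsilon^{2\alpha})$ for the commutator, $O(\epsilon^{\alpha-1})$ for $\nabla u^\epsilon$, hence $O(\epsilon^{3\alpha-1})$ in $L^1$) is the special case of the paper's Besov-type hypothesis~\eqref{EulerBesov}, which is strictly weaker than $u\in C^\alpha$ with $\alpha>1/3$. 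The only point worth keeping track of, as the paper does by assuming $p\in L^{3/2}$, is the integrability of the pressure needed when you pass $\Div(p^\epsilon u^\epsilon)\to\Div(p\,u)$ in the distributional limit; for bounded $u$ this is harmless, but it should be said.

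For part (b) you have written a roadmap, not a proof: the Euler--Reynolds iteration, Mikado flows, and amplitude modulation to prescribe a decreasing energy profile are indeed the ingredients of the actual resolution, but the choice of the parameter sequences $(\delta_q,\lambda_q)$, the gluing and localisation needed to deploy Mikado flows, and the inductive propagation of the stress, derivative and material-derivative estimates are precisely where the content lies, and you leave them entirely open. To be fair, the paper does the same: it records the dissipative half as resolved in~\cite{isett16, buckmasteretal17} without reproving it, and the convex-integration construction it does carry out (Section~\ref{chainrule}) concerns the much softer stationary transport problem and produces only $L^\infty$ fields, nowhere near H\"older-continuous Euler solutions. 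So your proposal for (b) is consistent with the strategy the paper cites, but it should not be mistaken for a proof of that direction.
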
 

It may seem strange that Onsager, a physicist, would worry about the validity of the chain rule for non-differentiable functions at a time where no counterexamples to the chain rule were known in the first place. In fact, he didn't; rather, his motivation stems from phenomenological turbulence theories (particularly the one of Kolmogorov) that make predictions on the energy spectrum of a fully turbulent fluid, which in turn lead to certain regularity properties of the velocity. This also means that non-conservative solutions of Euler are not always mathematical pathologies, but they are expected in turbulence theory and can be experimentally observed. We refer to~\cite{frisch, eyinksurvey, shvydkoysurvey} for more physically oriented overviews.

Onsager's conjecture has meanwhile largely been proved. Part b) was completed only recently in~\cite{isett16, buckmasteretal17}, based on convex integration techniques whose development started with~\cite{euler1}. Part a) is more classical -- it was initially solved in~\cite{GEY, CET} and then refined in~\cite{duchonrobert, CCFS08, FjWi}.

To formulate Part a) more precisely, we have the following result:
\begin{theorem}\label{eulerthm}
Let $(u,p)\in L^3\times L^{3/2}(\T^3\times(0,T))$ be a weak solution of~\eqref{inceuler} such that 
\begin{equation}\label{EulerBesov}
\lim_{\epsilon\to0} \int_0^T\int_{\T^3}\int_{B_\epsilon(0)}\frac{|u(x)-u(x-y)|^3}{\epsilon^{4}}dydxdt=0.
\end{equation} 
Then, the local energy inequality~\eqref{localen} is satisfied in the sense of distributions.
\end{theorem}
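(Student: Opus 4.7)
The plan is to mimic the Constantin-E-Titi commutator strategy that already appeared (for a transported passive scalar) in the proof sketch of Theorem~\ref{transportthm}, adapted to the fact that for Euler the ``transported'' quantity and the ``transporting'' field are one and the same. I would mollify the momentum equation in space against $\chi_\epsilon$ to obtain
\begin{equation*}
\partial_t u_\epsilon + \Div(u_\epsilon \otimes u_\epsilon) + \nabla p_\epsilon = -\Div R_\epsilon, \qquad R_\epsilon := (u\otimes u)_\epsilon - u_\epsilon\otimes u_\epsilon,
\end{equation*}
together with $\Div u_\epsilon = 0$. Since $u_\epsilon$ is smooth in space, the chain rule applies and taking the dot product with $u_\epsilon$ yields, rigorously,
\begin{equation*}
\partial_t \tfrac{|u_\epsilon|^2}{2} + \Div\!\left(\left(\tfrac{|u_\epsilon|^2}{2} + p_\epsilon\right) u_\epsilon\right) = -u_\epsilon\cdot \Div R_\epsilon = -\Div(R_\epsilon u_\epsilon) + R_\epsilon : \nabla u_\epsilon.
\end{equation*}
Under the hypothesis $u\in L^3$, $p\in L^{3/2}$, the left hand side converges in $\Dcal'$ to the formal local energy expression in~\eqref{localen}, and the term $\Div(R_\epsilon u_\epsilon)$ tested against $\phi$ becomes $-\int\!\!\int \nabla\phi\cdot R_\epsilon u_\epsilon\,dxdt$, which vanishes in the limit once we know $R_\epsilon\to 0$ in $L^1$. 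Thus everything reduces to showing that $R_\epsilon : \nabla u_\epsilon \to 0$ in $\Lrm^1(\T^3\times(0,T))$.

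The crucial algebraic identity is the Constantin-E-Titi decomposition
\begin{equation*}
R_\epsilon(x,t) = r_\epsilon(u)(x,t) - (u_\epsilon - u)\otimes(u_\epsilon - u)(x,t), \qquad r_\epsilon(u)(x,t) := \int \chi_\epsilon(y)\,\delta_y u\otimes \delta_y u\,dy,
\end{equation*}
with $\delta_y u(x,t):=u(x-y,t)-u(x,t)$. Both summands are then estimated in $L^{3/2}$ by $\sup_{|y|\leq\epsilon}\|u(\cdot-y,\cdot)-u\|_{L^3}^{2}$ and, via Jensen, bounded by
\begin{equation*}
\|R_\epsilon\|_{L^{3/2}}^{3/2} \lesssim \int_0^T\!\!\int_{\T^3}\!\!\int_{B_\epsilon(0)} \frac{|u(x,t)-u(x-y,t)|^3}{\epsilon^{d}}\,dy\,dx\,dt.
\end{equation*}
Combining this with Hölder $\|R_\epsilon : \nabla u_\epsilon\|_{L^1}\leq \|R_\epsilon\|_{L^{3/2}} \|\nabla u_\epsilon\|_{L^3}$, and noting (exactly as in the argument preceding Theorem~\ref{transportthm}) that $\|\nabla u_\epsilon\|_{L^3}\lesssim \epsilon^{-1} \sup_{|y|\leq\epsilon}\|u(\cdot-y,\cdot)-u\|_{L^3}$, one obtains
\begin{equation*}
\int_0^T \|R_\epsilon : \nabla u_\epsilon\|_{L^1}\,dt \lesssim \frac{1}{\epsilon} \int_0^T\!\!\int_{\T^3}\!\!\int_{B_\epsilon(0)} \frac{|u(x,t)-u(x-y,t)|^3}{\epsilon^{d}}\,dy\,dx\,dt = \int_0^T\!\!\int_{\T^3}\!\!\int_{B_\epsilon(0)} \frac{|u(x,t)-u(x-y,t)|^3}{\epsilon^{d+1}}\,dy\,dx\,dt,
\end{equation*}
which tends to zero precisely by assumption~\eqref{EulerBesov} (noting $d=3$, $d+1=4$).

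The main obstacle is the final $L^1$ bound on $R_\epsilon : \nabla u_\epsilon$: the factor $\|\nabla u_\epsilon\|_{L^3}$ blows up like $\epsilon^{-1}$ as $\epsilon\to 0$, so one genuinely needs the cubic increment control to decay faster than $\epsilon$, i.e., strictly more than the Hölder scaling $u\in C^{1/3}$ would give in a na\"ive reading. It is exactly the \emph{Besov-VMO} formulation~\eqref{EulerBesov}, which demands $\epsilon^{-1}$ times the averaged cubic increment to vanish in the limit (rather than merely to be bounded), that closes the estimate at the Onsager exponent. Once this step is handled, one passes to the limit in the sense of distributions to recover~\eqref{localen}; the pressure term causes no trouble since $p_\epsilon u_\epsilon \to pu$ in $L^1$ by $p\in L^{3/2}$, $u\in L^3$, and similarly $|u_\epsilon|^2 u_\epsilon\to |u|^2 u$ in $L^1$ (after applying the Besov condition and the continuity of convolution on $L^3$).
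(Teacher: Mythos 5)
Your argument is exactly the paper's intended proof: the paper disposes of Theorem~\ref{eulerthm} by saying one repeats the Constantin--E--Titi commutator argument of Theorem~\ref{transportthm} with $u$ playing the role of both $\rho$ and the transporting field, i.e.\ $p=q=3$, $\alpha=\beta=\tfrac13$, which is precisely your mollification, CET decomposition of $(u\otimes u)_\epsilon-u_\epsilon\otimes u_\epsilon$, and H\"older/Jensen estimate closed by~\eqref{EulerBesov}. The only cosmetic point is that $\|\nabla u_\epsilon\|_{L^3}$ should be bounded by $\epsilon^{-1}$ times the \emph{averaged} cubic increment (H\"older in $y$, as in the paper's $\epsilon^{1-\alpha}\|\nabla\rho_\epsilon\|_{L^p}\to0$ step) rather than the supremum over $|y|\le\epsilon$, after which your final displayed inequality stands as written.
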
 

To prove this, one proceeds exactly as in the proof of Theorem~\ref{transportthm}, where the velocity now plays the role of both $\rho$ and $u$, thus forcing $p=q=3$ and $\alpha=\beta=\frac13$.

As before, note that condition~\eqref{EulerBesov} is implied by $u\in L^3(0,T;C^{\alpha}(\T^3))$ for some $\alpha>\frac13$, or by $u\in L^3(0,T;B_{3,\infty}^{\alpha}(\T^3))$ for some $\alpha>\frac13$ (this is the space considered in~\cite{CET}), or by $u\in L^3(0,T;B_{3,c_0}^{1/3}(\T^3))$ (this is the space considered in~\cite{CCFS08}). Another important remark is that the result is actually true in any space dimension (so the exponent $1/3$ has nothing to do with 3D space!). In contrast, Part b) of Onsager's conjecture is still open in two dimensions.

\subsection{Bounded domains}
Let now $\Omega\in\R^3$ be a smooth bounded domain. Recall that the strategy for Theorems~\ref{transportthm} and~\ref{eulerthm} was to mollify the equation in space and thus to obtain
\begin{equation}\label{eulermolly}
\partial_t u_\epsilon+\Div(u_\epsilon\otimes u_\epsilon)+\nabla p_\epsilon=R_\epsilon
\end{equation}    
for the commutator $R_\epsilon=\Div(u_\epsilon\otimes u_\epsilon-(u\otimes u)_\epsilon)$. It seems like an issue that the mollification of a function cannot be defined in an obvious way on a bounded domain. However, choosing a compactly supported test function $\phi\in C^1_c(\Omega;\R)$, $u_\epsilon$ is well-defined on the support of $\phi$ as long as $\epsilon<\dist(\supp\phi,\partial\Omega)$. Therefore, testing~\eqref{eulermolly} against $\phi$, everything is well-defined. Suppose now~\eqref{EulerBesov} is satisfied on $\supp\phi$ for each $\phi\in C^1_c(\Omega)$; this means that~\eqref{EulerBesov} holds locally in $\Omega$, but not necessarily uniformly up to the boundary. Then, just as in the proof of Theorem~\ref{transportthm} we obtain $u_\epsilon\cdot R_\epsilon\to0$ in the sense of distributions, so that the local energy equality holds in the sense of distributions on $\Omega$.

The idea is that the local energy equality~\eqref{localen} is stronger than the global one~\eqref{globalen}, since one can deduce the latter from the former simply by integrating in space. On bounded domains, though, one should be a bit more careful. 

Given~\eqref{localen}, integration in space would amount to testing with $\phi\equiv1$. But we are only allowed to test with $\phi\in C^1_c(\Omega)$. So instead, let $\xi:\R_0^+\to\R_0^+$ be a smooth cutoff function that is zero near $s=0$ and one for $s\geq1$, and let $\xi_\delta(s):=\xi\left(\frac s\delta\right)$. Set $\phi_\delta(x):=\xi_\delta(\dist(x,\partial\Omega))$, so that $\phi_\delta\in C^1_c(\Omega)$, and test~\eqref{localen} against $\phi_\delta$:

\begin{equation}
\frac{d}{dt}\int_\Omega\phi_\delta\frac{|u|^2}{2}dx=\int_\Omega \nabla\phi_\delta\cdot u\left(\frac{|u|^2}{2}+p\right)dx. 
\end{equation}    
The left hand side converges to $\frac{d}{dt}\int_\Omega\frac{|u|^2}{2}dx$ as $\delta\to0$. If we want~\eqref{globalen} to hold, therefore, we must show that the right hand side converges to zero.

Since $\partial\Omega$ is smooth, there exists a neighbourhood $\Gamma_0$ of $\partial\Omega$ where the orthogonal projection of $x\in\Gamma_0$ to $\partial\Omega$ is uniquely defined. Let's call this projection $\sigma(x)\in\partial\Omega$, and denote by $n(\sigma(x))$ the outer unit normal to $\partial\Omega$ at $\sigma(x)$.

Then, $\nabla\phi_\delta(x)$ is parallel to $n(\sigma(x))$ with absolute value bounded by $\frac C\delta$, and on the other hand, $\nabla\phi_{\delta}$ is supported on $\Gamma_\delta:=\{\dist(x,\partial\Omega)<\delta\}$, where the volume of $\Gamma_\delta$ is comparable to $\delta$.

Hence, we have
\begin{equation}
\left|\int_\Omega \nabla\phi_\delta\cdot u\left(\frac{|u|^2}{2}+p\right)dx\right|\leq \frac C\delta\int_{\Gamma_\delta}\left|\frac{|u|^2}{2}+p\right||u(x)\cdot n(\sigma(x))|dx.
\end{equation}  
From this, it is easy to read off some sufficient conditions for global energy conservation: For instance, one could require $u$ and $p$ to be bounded in some neighbourhood of $\partial\Omega$, $x\mapsto u(x)\cdot n(\sigma(x))$ to be continuous in a neighbourhood of $\partial\Omega$, and $u\cdot n=0$ on $\partial\Omega$, which is the natural slip boundary condition for Euler anyway.

To summarise:
\begin{theorem}\label{eulerboundary}
Let $(u,p)$ be a weak solution of~\eqref{inceuler} such that $u$ satisfies~\eqref{EulerBesov} locally in $\Omega$. Suppose $(u,p)$ is bounded in some neighbourhood of $\partial\Omega$, $x\mapsto u(x)\cdot n(\sigma(x))$ is continuous in a neighbourhood of $\partial\Omega$, and $u\cdot n=0$ on $\partial\Omega$. Then, the global energy equality~\eqref{globalen} holds in the sense of distributions.
\end{theorem}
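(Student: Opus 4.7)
The plan is to apply the local energy equality from Theorem~\ref{eulerthm}, which under the local Besov-VMO hypothesis~\eqref{EulerBesov} on $\Omega$ gives
\begin{equation*}
\partial_t\frac{|u|^2}{2}+\diverg\left(\left(\frac{|u|^2}{2}+p\right)u\right)=0 \qquad\text{in }\mathcal{D}'(\Omega\times(0,T)),
\end{equation*}
and then to upgrade it to the global identity~\eqref{globalen} by testing against the boundary cutoff $\phi_\delta(x):=\xi_\delta(\dist(x,\partial\Omega))$ already introduced in the discussion preceding the statement, and sending $\delta\to 0$. Since $\phi_\delta\in C^1_c(\Omega)$ for every $\delta>0$, pairing with a test function $\psi(t)\in C^1_c(0,T)$ gives after integration in space
\begin{equation*}
\int_0^T\psi'(t)\int_\Omega\phi_\delta\frac{|u|^2}{2}\dx\dt=-\int_0^T\psi(t)\int_\Omega\nabla\phi_\delta\cdot u\left(\frac{|u|^2}{2}+p\right)\dx\dt.
\end{equation*}

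The first main step is to pass to the limit on the left-hand side: since $\phi_\delta\to 1$ pointwise on $\Omega$ with $0\le\phi_\delta\le1$, and $u\in L^3(\T^3\times(0,T))\subset L^2$, dominated convergence yields convergence to $\int_0^T\psi'(t)\int_\Omega\frac{|u|^2}{2}\dx\dt$, which is precisely what we need to recover~\eqref{globalen} in the distributional sense. The hard part is the second step, namely showing that the right-hand side tends to zero. Using the smoothness of $\partial\Omega$, in a tubular neighbourhood $\Gamma_0$ the projection $\sigma$ is well-defined and $\nabla\phi_\delta(x)$ is parallel to $n(\sigma(x))$, of magnitude $O(1/\delta)$ and supported in the tube $\Gamma_\delta:=\{\dist(\sbullet,\partial\Omega)<\delta\}$, whose volume is of order $\delta$. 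Together with the assumed boundedness of $u$ and $p$ in a neighbourhood of $\partial\Omega$, this gives the estimate
\begin{equation*}
\left|\int_\Omega\nabla\phi_\delta\cdot u\left(\frac{|u|^2}{2}+p\right)\dx\right|\leq \frac{C}{\delta}\int_{\Gamma_\delta}\absb{u(x)\cdot n(\sigma(x))}\dx,
\end{equation*}
uniformly in $t$ (for almost every $t$, after bounding the $L^\infty$ norms near the boundary).

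The key and, as I see it, the main obstacle, is to cancel the factor $1/\delta$ coming from $\nabla\phi_\delta$. For this I would exploit the two remaining hypotheses in tandem: the continuity of $x\mapsto u(x)\cdot n(\sigma(x))$ in a neighbourhood of $\partial\Omega$, together with the slip condition $u\cdot n=0$ on $\partial\Omega$. Given $\eta>0$, continuity plus vanishing on $\partial\Omega$ yields some $\delta_0>0$ such that $|u(x)\cdot n(\sigma(x))|<\eta$ for all $x\in\Gamma_{\delta_0}$. Then for $\delta<\delta_0$,
\begin{equation*}
\frac{C}{\delta}\int_{\Gamma_\delta}\absb{u(x)\cdot n(\sigma(x))}\dx\leq \frac{C}{\delta}\cdot\eta\cdot|\Gamma_\delta|\leq C'\eta,
\end{equation*}
since $|\Gamma_\delta|\leq C''\delta$ for small $\delta$. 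As $\eta>0$ was arbitrary, the right-hand side above tends to zero, uniformly in $t$ on the support of $\psi$ (by the assumed boundedness of $u,p$ near $\partial\Omega$ dominated convergence in $t$ is immediate). This delivers the distributional identity $\frac{d}{dt}\int_\Omega\frac{|u|^2}{2}\dx=0$ and completes the argument. The delicate point, worth isolating as a lemma, is exactly the uniform-in-$\delta$ control of the normal boundary integral, which is why the continuity of the normal component up to $\partial\Omega$ (rather than merely a trace in some weak sense) is explicitly assumed.
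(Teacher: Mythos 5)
Your proposal is correct and follows essentially the same route as the paper: derive the local energy equality in the interior from the local Besov condition (via the interior mollification argument behind Theorem~\ref{eulerthm}), test it against the boundary cutoff $\phi_\delta$, and use the continuity of $x\mapsto u(x)\cdot n(\sigma(x))$ together with $u\cdot n=0$ on $\partial\Omega$ to make the normal component uniformly small on $\Gamma_\delta$, cancelling the $1/\delta$ from $\nabla\phi_\delta$ against $|\Gamma_\delta|\sim\delta$. The way you isolate and justify the uniform smallness of the boundary integral is exactly the paper's key step, so nothing further is needed.
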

This result is taken from~\cite{BTW18}, where the theorem is stated under much weaker assumptions (in particular on the pressure). Similar results, with some differences on the technical level, were obtained independently in~\cite{DN18}. Earlier results on Onsager's conjecture in domains with boundaries were obtained in~\cite{RRS18, BT18}.

A useful feature of this result is that, near the boundary, it puts conditions only on the \emph{normal} component of the velocity, but not on the tangential one. This gives rise to an application to the viscosity limit. 

Recall the Navier-Stokes equations with viscosity $\nu>0$,
\begin{equation}\label{NSE}
\begin{aligned}
\partial_t u_\nu+(u_\nu\cdot\nabla)u_\nu+\nabla p_\nu&=\nu \Delta u_\nu\\
\Div u_\nu&=0,
\end{aligned}
\end{equation}   
which are known to admit (for given initial data in $L^2$) global weak (so-called Leray-Hopf) solutions in $\Omega$, subject to the no-slip boundary condition $u=0$ on $\partial\Omega$. It is commonly expected that, for small $\nu$, the Navier-Stokes flow will behave like an Euler flow except on a boundary layer of thickness of order $\sqrt\nu$, where it decays steeply to $u=0$ at $\partial\Omega$. Thus, one expects the normal velocity component to be very small in the boundary layer, but the tangential velocity component to have a gradient of magnitude $\nu^{-1/2}$. Since Theorem~\ref{eulerboundary} makes no assumption on the tangential component, the result is consistent with the formation of such a boundary layer. More precisely:

\begin{corollary}[\cite{BTW18}]
Let $(u_\nu)_{\nu>0}$ be a family of Leray-Hopf weak solutions of~\eqref{NSE} with viscosity $\nu$ and initial data $u^0\in L^2(\Omega)$, and suppose this family satisfies the conditions of Theorem~\ref{eulerboundary} uniformly in $\nu$. Then, there exists a subsequence $\nu_k\to0$ such that $u_\nu\to u$ strongly in $L^\infty(0,T;L^2(\Omega))$, where $u$ is a weak solution of the Euler equations that conserves energy.
\end{corollary}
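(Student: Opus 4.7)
The plan is to extract a weak-$*$ subsequential limit $u$ of $(u_\nu)$ using the Leray--Hopf energy inequality, verify that $u$ is a weak Euler solution satisfying the hypotheses of Theorem~\ref{eulerboundary}, deduce global energy conservation for $u$, and finally upgrade weak to strong $L^\infty(0,T;L^2)$ convergence via convergence of norms. The Leray--Hopf energy inequality furnishes uniform bounds of $(u_\nu)$ in $L^\infty(0,T;L^2(\Omega))$ and of $(\sqrt{\nu}\,\nabla u_\nu)$ in $L^2(0,T;L^2(\Omega))$, so Banach--Alaoglu yields a subsequence $\nu_k \to 0$ with $u_{\nu_k} \overset{*}{\rightharpoonup} u$ in $L^\infty(0,T;L^2(\Omega))$. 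The uniform-in-$\nu$ version of~\eqref{EulerBesov} supplies equicontinuity in space on compact subsets of $\Omega$; combined with time-equicontinuity extracted from the equation (for divergence-free test functions the pressure drops out, the convective term is bounded in $H^{-s}_{\loc}$ by $\|u_{\nu_k}\|_{L^2}^2$, and $\nu_k\Delta u_{\nu_k}\to 0$ in $H^{-s}_{\loc}$ since $\sqrt{\nu_k}\,\nabla u_{\nu_k}$ is bounded in $L^2L^2$), an Aubin--Lions argument produces strong convergence $u_{\nu_k} \to u$ in $L^3_{\loc}(\Omega \times (0,T))$, which passes the nonlinearity to the limit and shows that $u$ solves~\eqref{inceuler} in the distributional sense.

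Each of the four hypotheses of Theorem~\ref{eulerboundary}, assumed uniformly in $\nu$, descends to $u$: \eqref{EulerBesov} by a Fatou-type lower semicontinuity, uniform boundedness of $(u_{\nu_k},p_{\nu_k})$ near $\partial\Omega$ by weak-$*$ lower semicontinuity, uniform continuity of $x \mapsto u_{\nu_k}(x)\cdot n(\sigma(x))$ near $\partial\Omega$ by Arzel\`a--Ascoli, and preservation of the trace $u\cdot n=0$ on $\partial\Omega$. Theorem~\ref{eulerboundary} then gives the global energy equality, hence $\|u(t)\|_{L^2(\Omega)}=\|u^0\|_{L^2(\Omega)}$ for every $t$ and $u\in C([0,T];L^2(\Omega))$ strongly. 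For each $t$, the weak convergence $u_{\nu_k}(t)\rightharpoonup u(t)$ in $L^2(\Omega)$---valid for a.e.\ $t$ from the weak-$*$ convergence and extended to all $t$ by weak time-continuity of both sides---together with $\|u_{\nu_k}(t)\|_{L^2}^2\le\|u^0\|_{L^2}^2=\|u(t)\|_{L^2}^2$ yields
\begin{equation*}
\|u_{\nu_k}(t)-u(t)\|_{L^2}^2 \le 2\,\langle u(t)-u_{\nu_k}(t),\,u(t)\rangle \to 0
\end{equation*}
pointwise in $t$. Uniformity in $t$ is obtained by approximating $u$ by smooth $u^\varepsilon\in C^\infty_c(\Omega\times(0,T);\R^3)$ in the $C([0,T];L^2(\Omega))$-norm (possible since $u$ is strongly $L^2$-continuous in time): the approximation error is controlled uniformly in $t$ by the a priori $L^\infty L^2$-bound on $u_{\nu_k}-u$, while $t\mapsto\langle u_{\nu_k}(t)-u(t),\,u^\varepsilon(t)\rangle$ is equicontinuous in $k$ (from the uniform $H^{-s}_{\loc}$-bound on $\partial_t u_{\nu_k}$ paired with the smooth $u^\varepsilon$) and tends to zero pointwise, so Arzel\`a--Ascoli yields uniform convergence for each fixed $\varepsilon$, and a standard $\varepsilon$--$\delta$ argument concludes.

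The most delicate step is the final upgrade from pointwise-in-$t$ to uniform-in-$t$ strong $L^2$ convergence. Since $(u_{\nu_k})$ satisfies only an energy \emph{inequality}, no direct convergence-of-norms argument closes the gap uniformly in time; the crux is that the strong $C([0,T];L^2(\Omega))$-regularity of the limit---itself the essential payoff of Theorem~\ref{eulerboundary}---is precisely what enables smooth approximation of $u$ in time and the extraction of the required cross-term equicontinuity from the equation. Without energy conservation for the limit, only a weaker (non-uniform-in-$t$) mode of convergence would be available.
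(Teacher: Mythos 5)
Your argument is essentially the proof from~\cite{BTW18} (the notes themselves only cite that paper): uniform energy bounds plus the uniform Besov condition and the equation give compactness, the limit is identified as a weak Euler solution inheriting the hypotheses of Theorem~\ref{eulerboundary} and hence conserving energy, and the upgrade to strong $L^\infty(0,T;L^2(\Omega))$ convergence comes from combining the Leray--Hopf energy inequality with $\|u(t)\|_{L^2}=\|u^0\|_{L^2}$ and uniform-in-time weak $L^2$ convergence. The one point to patch is the final equicontinuity step: choose the approximants $u^\varepsilon$ divergence-free (and merely smooth up to $t=0,T$ rather than compactly supported in time, since $C([0,T];L^2)$-approximation by functions vanishing at the endpoints is impossible unless $u$ does), so that the pressure term does not enter the bound on $\partial_t\langle u_{\nu_k}(t)-u(t),u^\varepsilon(t)\rangle$ -- otherwise you would first need uniform interior pressure estimates, which you never established; with that adjustment the proof is correct.
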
 

It is interesting to compare this result to~\cite{constvic, DN18a}, where purely interior assumptions are made on the solution, and anomalous energy dissipation is therefore not excluded.

\section{Statistical solutions}
As phenomenological theories of turbulence are of a statistical nature, it is natural to place the fundamental PDEs of fluid mechanics in a probabilistic framework. A classical way to do this is given by DiPerna's measure-valued solutions~\cite{DiPerna}, formulated for the incompressible Euler equations by DiPerna and Majda~\cite{dipernamajda}. To illustrate the idea, consider again the incompressible Euler equations~\eqref{inceuler}. Usually one wants to find a solution which is a vectorfield $u:\mathbb{T}^3\times[0,T]\to\R^3$, that is, at each (or at least almost every) point in space and time, one gives the velocity. In contrast, suppose the velocity at a point in space-time is not known exactly, but only as a probability distribution: This can be modelled by a map $\T^3\times[0,T]\to\mathcal P(\R^3)$ from space-time into the set of probability measures on the phase space $\R^3$. Thus, at each point $(x,t)$, there is a probability measure $\mu_{x,t}$ such that the probability that the velocity is in a (measurable) subset $U\subset\R^3$ is given by $\mu_{x,t}(V)$. 

Writing $\bar u(x,t):= \int_{\R^3}\xi d\mu_{x,t}(\xi)$ and $\overline{u\otimes u}(x,t):= \int_{\R^3}\xi\otimes\xi d\mu_{x,t}(\xi)$, we say that $\mu$ is a measure-valued solution of the Euler equations if 
 \begin{equation}
\begin{aligned}
\partial_t \bar u+\overline{u\otimes u}+\nabla p&=0\\
\Div \bar u&=0
\end{aligned}
\end{equation}   
   in the sense of distributions. (Observe that $(u\cdot\nabla)u=\Div(u\otimes u)$, which allows to write the equations in divergence form.) We deliberately ignore here the issue of possible concentrations requiring a generalised measure-valued setting, and refer to~\cite{dipernamajda}. 
	
Although such solutions are easily shown to exist for any initial datum, and have other useful properties (see e.g.~\cite{weakstrongsurvey}), they are felt by some to contain too little information. Indeed, it is not possible to represent with them two-point correlations, i.e.\ expressions of the form ``the probability that the velocity at point $(x,t)$ is in $U_1$ and the velocity at $(y,t)$ is in $U_2$''. On a related note, it is not obvious how to make sense of a Besov condition like~\eqref{BesovVMO} for a measure-valued solution.

To describe the statistics of a flow, Fjordholm et al.~\cite{FjLaMi} propose a new notion of statistical solutions, where solutions to conservation laws are given as \emph{correlation Young measures}. In the context of incompressible Euler, this was studied in~\cite{FjWi}. The idea is to describe the two-point statistics of a fluid in terms of a parametrised measure $\mu_{x,y,t}\in\mathcal{P}(\R^4\times\R^4)$, where $x,y\in \T^3$ are points from the space domain. The measure $\mu_{x,y}(du_1,dp_1,du_2,dp_2)$ can then be interpreted as the joint probability that the velocity and pressure at point $x$ are in $du_1\times dp_1$ and the velocity and pressure at point $y$ are in $du_2\times dp_2$. 

More precisely, such a correlation measure is a pair $(\mu^1,\mu^2)$, where $\mu^1_{x,t}$ is a probability measure on $\R^4$ and $\mu^2:{x,t}$ is a probability measure on $\R^4\times \R^4$ for almost every $(x,t)$, such that the following conditions are satisfied: 
\begin{enumerate}
\item \textit{Symmetry:} If $f\in C_0(\R^3\times\R^3)$ then 
\begin{equation*}
\int_{\R^4\times\R^4} f(\xi,\eta)d\mu^2_{x,y,t}(\xi, \eta) = \int_{\R^3\times\R^3} f(\eta,\xi)d\mu^2_{y,x,t}(\eta,\xi)
\end{equation*} for a.e.\ $x,y\in \R^3$; 
\item \textit{Consistency:} If $f\in C_0(\R^4\times\R^4)$ is of the form $f(\xi,\eta) = g(\xi)$ for some $g\in C_0(\R^4)$, then 
\begin{equation*}
\int_{\R^4\times\R^4} f(\xi,\eta)d\mu^2_{x,y,t}(\xi, \eta)=\int_{\R^4}g(\xi)d\mu^1_{x,t}(\xi)
\end{equation*}
for almost every $(x,y)\in \R^3\times \R^3$.
\end{enumerate}

One can now give a natural definition of $\mu$ to be a solution of the Euler equations: Indeed, if $(u,p)$ is a smooth solution and $u^i$ denotes the $i$-th velocity component, then 
\begin{equation}
\begin{split}
\partial_t\left(u^i(x)u^j(y)\right) + \sum_k \partial_{x^k}\left(u^i(x)u^k(x)u^j(y)\right) + \sum_k\partial_{y^k}\left (u^i(x)u^k(y)u^j(y)\right) \\
~ + \partial_{x^i}\left(p(x)u^j(y)\right) + \partial_{y^j} \left(u^i(x)p(y)\right) = 0,
\end{split}
\end{equation}
as can be seen by adding the equation evaluated at $x$ and multiplied by $u^j(y)$ to the equation evaluated at $y$ and multiplied by $u^i(x)$. Thus, replacing the occurrences of the velocity and pressure by the correlation measure, one is led to the equation 
\begin{equation}
\begin{split}
\partial_t\langle\mu^2_{x,y}u_1^iu_2^j\rangle + \sum_k \partial_{x^k}\langle\mu^2_{x,y}u_1^iu_1^ku_2^j\rangle + \sum_k\partial_{y^k}\langle\mu^2_{x,y}u_1^iu_2^ku_2^j\rangle \\
~ + \partial_{x^i}\langle\mu^2_{x,y}p_1u_2^j\rangle + \partial_{y^j} \langle\mu^2_{x,y}u_1^ip_2\rangle = 0,
\end{split}
\end{equation}
where we wrote $\langle\mu^2_{x,y}u_1^iu_2^j\rangle=\int_{\R^4\times\R^4}u_1^iu_2^jd\mu_{x,y}(u_1,p_1,u_2,p_2)$ etc.\ (hence the dummy variables $u_1, u_2, p_1, p_2$ stand for $u(x), u(y), p(x), p(y)$, respectively).

The natural extension of the Besov-type assumption~\eqref{EulerBesov} is then 
 \begin{equation}
\lim_{\epsilon\to0} \int_0^T\int_{\T^3}\int_{B_\epsilon(0)}\frac{\langle \mu^2_{x,x-y}; |u_1-u_2|^3\rangle}{\epsilon^{4}}dydxdt=0,
\end{equation} 
which is the measure-valued analogue of~\eqref{EulerBesov}.

In~\cite{FjWi} we show that the energy is (locally) conserved under this assumption.

\section{General conservation laws}
A crucial observation made in~\cite{FGSW} is that the commutator estimates remain valid even when the nonlinearities involved are not quadratic. For instance, for the isentropic compressible Euler system 
\begin{equation}\label{compE}
\begin{aligned}
\partial_t(\rho u)+\Div(\rho u\otimes u)+\nabla p(\rho)&=0,\\
\partial_t \rho+\Div(\rho u)&=0,
\end{aligned}
\end{equation}
one formally has the local conservation of energy:
\begin{equation*}
\partial_t\left(\frac{\rho |u|^2}{2}+P(\rho)\right)+\operatorname{div}\left[\left(\frac{\rho |u|^2}{2}+P(\rho)+p(\rho)\right)v\right]= 0,
\end{equation*}
where $P$ is the pressure potential as defined in~\eqref{presspot}. To prove this rigorously for weak solutions with a commutator argument, one needs to estimate, e.g., 
\begin{equation}
p(\rho)_\epsilon-p(\rho_\epsilon).
\end{equation}
If $\rho\mapsto p(\rho)$ is twice differentiable in the closure of the range of $\rho$, Taylor expansion yields
\begin{equation}
p(\rho_\epsilon)\sim p(\rho) + p'(\rho)(\rho_\epsilon-\rho) + \frac12 p''(\rho)(\rho_\epsilon-\rho)^2 
\end{equation}
as well as 
\begin{equation}
p(\rho(y))\sim p(\rho(x)) + p'(\rho(x))(\rho(y)-\rho(x)) + \frac12 p''(\rho(x))(\rho(y)-\rho(x))^2. 
\end{equation}
Multiplication of the latter by $\chi_\epsilon(x-y)$, integration w.r.t.\ $y$, and subtraction of both equations yields
\begin{equation*}
|p(\rho_\epsilon)-p(\rho)_\epsilon|\lesssim  \|p\|_{C^2}(\rho_\epsilon-\rho)^2,
\end{equation*}
so we have reduced the problem again to a quadratic nonlinearity. In this way, one arrives at the following result:

\begin{theorem}\label{compressibleonsager}
Let $\rho$, $v$ be a solution of~\eqref{compE} in the sense of distributions. Assume $\rho$ and $\rho v$ both satisfy~\eqref{BesovVMO} and $v$ satisfies~\eqref{BesovVMO2}, where also a time shift is considered\footnote{This means that~\eqref{BesovVMO} turns into \begin{equation*}
\lim_{\epsilon\to0} \int_0^T\int_{\T^d}\int_{B_\epsilon(0)}\frac{|\rho(x,t)-\rho(x-y,t-\tau)|^p}{\epsilon^{d+1+\alpha p}}dyd\tau dxdt=0,
\end{equation*} where the ball with radius $\epsilon$ is considered in space-time. Analogously, one needs to alter~\eqref{BesovVMO2}.}\label{footnote}. Suppose further
\begin{equation*}
0 \leq \underline{\rho} \leq \rho \leq \overline{\rho} \ \mbox{a.a. in} (0,T)\times\mathbb{T}^d,
\end{equation*}
for some constants $\underline{\vr}$, $\overline{\vr}$, and

\begin{equation}
2\alpha+\beta>1,\quad \alpha+2\beta>1, \quad p=q=3.
\end{equation}
Assume moreover that $p \in C^2[\underline{\vr}, \overline{\vr}]$, and, in addition
\begin{equation}
p'(0) = 0 \ \mbox{as soon as}\ \underline{\vr} = 0.
\end{equation}
Then the energy is locally conserved, i.e.
\begin{equation*}
\partial_t\left(\frac{1}{2}\rho|u|^2+P(\rho)\right)+\diverg\left[\left(\frac{1}{2}\rho|u|^2+p(\rho)+P(\rho)\right)u\right]=0
\end{equation*}
in the sense of distributions on $(0,T)\times\mathbb{T}^d$.
\end{theorem}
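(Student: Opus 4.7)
The plan is to adapt the Constantin--E--Titi commutator argument to the compressible system~\eqref{compE}, with the pressure contribution handled by a Taylor expansion as already indicated in the statement.

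First, I would regularise~\eqref{compE} by convolution with a standard mollifier $\chi_\eps$ supported in an $\eps$-ball of $\R^{d+1}$ (the availability of the space-time shift in \eqref{BesovVMO} recorded in footnote~\ref{footnote} is what allows simultaneous mollification in space and time). Writing $f_\eps:=f*\chi_\eps$, the mollified equations read
\begin{align*}
\partial_t\rho_\eps+\diverg(\rho u)_\eps&=0,\\
\partial_t(\rho u)_\eps+\diverg(\rho u\otimes u)_\eps+\nabla p(\rho)_\eps&=0.
\end{align*}
Since these are smooth, pointwise chain-rule manipulations are justified. Dotting the second equation with $u_\eps$, applying $P'(\rho_\eps)$ to the first (using the identity $\rho P'(\rho)-P(\rho)=p(\rho)$ which follows from~\eqref{presspot}), and combining, I would obtain the approximate local energy balance
\begin{equation*}
\partial_t\left(\tfrac12\rho_\eps|u_\eps|^2+P(\rho_\eps)\right)+\diverg\left[\left(\tfrac12\rho_\eps|u_\eps|^2+P(\rho_\eps)+p(\rho_\eps)\right)u_\eps\right]=\mathcal{R}^{\mathrm{conv}}_\eps+\mathcal{R}^{\mathrm{cont}}_\eps+\mathcal{R}^{\mathrm{pres}}_\eps,
\end{equation*}
whose three remainders come respectively from the cubic convective commutator $(\rho u\otimes u)_\eps-\rho_\eps u_\eps\otimes u_\eps$, the bilinear continuity commutator $(\rho u)_\eps-\rho_\eps u_\eps$, and the pressure commutator $p(\rho)_\eps-p(\rho_\eps)$.

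Second, I would show that each remainder tends to zero in $\mathcal{D}'$ as $\eps\to0$ by means of the algebraic identity
\begin{equation*}
(fg)_\eps-f_\eps g_\eps=\int\chi_\eps(y)\,\delta_yf\,\delta_yg\,dy-(f-f_\eps)(g-g_\eps),\qquad \delta_yf(x):=f(x-y)-f(x).
\end{equation*}
For the convective term I would reduce the cubic commutator to two bilinear ones via $m:=\rho u$,
\begin{equation*}
(\rho u\otimes u)_\eps-\rho_\eps u_\eps\otimes u_\eps=\bigl[(m\otimes u)_\eps-m_\eps\otimes u_\eps\bigr]+\bigl[(\rho u)_\eps-\rho_\eps u_\eps\bigr]\otimes u_\eps,
\end{equation*}
which is exactly why the Besov hypothesis is imposed on $\rho$, $m$ and $u$ separately. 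Pairing each remainder with a test function $\phi\in\DC((0,T)\times\mathbb{T}^d)$, integrating by parts to transfer the spatial derivative onto $u_\eps$ or onto $P'(\rho_\eps)$, and applying H\"older in $L^3$ (since $p=q=3$), the contributions are controlled by $\eps^{\alpha+2\beta-1}$ and $\eps^{2\alpha+\beta-1}$ times expressions that vanish by~\eqref{BesovVMO} and~\eqref{BesovVMO2}; the strict inequalities $\alpha+2\beta>1$ and $2\alpha+\beta>1$ then force $\mathcal{R}^{\mathrm{conv}}_\eps,\mathcal{R}^{\mathrm{cont}}_\eps\to 0$. For the pressure remainder, Taylor expansion on $[\underline{\vr},\overline{\vr}]$ as in the statement yields
\begin{equation*}
|p(\rho)_\eps-p(\rho_\eps)|\lesssim\|p''\|_{L^\infty([\underline{\vr},\overline{\vr}])}\int\chi_\eps(y)|\delta_y\rho|^2\,dy,
\end{equation*}
and pairing with $\nabla u_\eps$ (which is of order $\eps^{\beta-1}$ in $L^3$) produces again a factor $\eps^{2\alpha+\beta-1}\to 0$.

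The main obstacle is the vacuum case $\underline{\vr}=0$, in which the Taylor expansion of $p$ around $\rho(x)$ with a remainder uniformly bounded as $\rho(x)\to 0$ is not automatic. This is precisely where the extra hypothesis $p'(0)=0$ is needed: together with $p\in C^2[0,\overline{\vr}]$ it gives the representation $p(\rho)-p(0)=\int_0^\rho(\rho-s)p''(s)\,ds$, from which the quadratic pointwise bound on $|p(\rho)_\eps-p(\rho_\eps)|$ persists with a constant depending only on $\|p''\|_{L^\infty[0,\overline{\vr}]}$; the inert constant $p(0)$ (if present) contributes at most a perfect divergence that passes harmlessly to the limit. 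Once this is secured, sending $\eps\to 0$ in the approximate balance yields the claimed local conservation of energy in $\mathcal{D}'((0,T)\times\mathbb{T}^d)$.
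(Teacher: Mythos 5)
Your overall scheme is the same as the paper's (which only sketches the argument and defers details to~\cite{FGSW}): mollify in space--time in accordance with the footnote, derive the approximate local energy balance by testing the mollified momentum equation with $u_\eps$ and renormalising the mollified continuity equation with $P'(\rho_\eps)$ and $\tfrac12|u_\eps|^2$, control the commutators with the Constantin--E--Titi identity plus H\"older in $L^3$, and reduce the pressure commutator $p(\rho)_\eps-p(\rho_\eps)$ to a quadratic one by Taylor expansion. Up to that point your rates $\eps^{2\alpha+\beta-1}$ and $\eps^{\alpha+2\beta-1}$ are exactly what the hypotheses $2\alpha+\beta>1$, $\alpha+2\beta>1$ are for.

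There is, however, a genuine gap in your treatment of the vacuum case: you have misidentified where $p'(0)=0$ is needed. If $\underline{\vr}=0$ the theorem assumes $p\in C^2[0,\overline{\vr}]$, and then the Taylor expansion of $p$ around the base point $\rho(x)\in[0,\overline{\vr}]$ with Lagrange remainder bounded by $\tfrac12\|p''\|_{C^0[0,\overline{\vr}]}|\delta\rho|^2$ is automatic; your last paragraph therefore ``fixes'' a non-problem. The real difficulty sits in the step you describe as ``applying $P'(\rho_\eps)$ to the first equation'': the continuity commutator there appears as $P'(\rho_\eps)\,\diverg\bigl[(\rho u)_\eps-\rho_\eps u_\eps\bigr]$, and when you integrate by parts so that the derivative lands on $P'(\rho_\eps)$ you produce $P''(\rho_\eps)\nabla\rho_\eps$ (this is the term denoted $R^2_\eps$ in the paper's discussion of degenerate cases). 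From~\eqref{presspot} one has $P''(\rho)=p'(\rho)/\rho$, so near vacuum $P''(\rho)\sim p'(0)/\rho$ and $P'$ diverges logarithmically unless $p'(0)=0$; only under that hypothesis is $|P''|\le\|p''\|_{C^0[0,\overline{\vr}]}$, which is what your claimed bound $\eps^{2\alpha+\beta-1}$ for this term silently uses. So the hypothesis $p'(0)=0$ is there to make the pressure potential $P$ (through $P'$ and $P''$) behave like a $C^2$ nonlinearity up to $\rho=0$, not to salvage the Taylor expansion of $p$; as written, your proof uses the boundedness of $P'(\rho_\eps)$ and $P''(\rho_\eps)$ without justification, which is precisely the point where the argument would otherwise fail (and which, for $p(\rho)=\rho^\gamma$ with $1<\gamma<2$, is the obstruction addressed separately in~\cite{AkDeSkWi}).
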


Some remarks are in order. The result bears resemblance to Theorem~\ref{transportthm}, the most important difference arguably being the symmetry between the regularity and integrability indices $\alpha, \beta, p, q$; this comes from the fact that, unlike for the transport equation, there are now several commutators to control, in some of which the terms $\rho, \rho u$ appear twice and $u$ appears once, and vice versa. Secondly, we need to control the Besov-type regularity now also in time, since there is the nonlinear term $\rho u$ under the time derivative. (A possible way to avoid this is to write~\eqref{compE} in conservative variables, thus replacing $\rho u$ by $m$, and to obtain the energy equality upon testing with $\frac{m_\epsilon}{\rho_\epsilon}$ rather than $u_\epsilon$. This idea, carried out in~\cite{LSh} for the inhomogeneous incompressible Euler equations, succeeds in avoiding any assumption on time regularity, but leads to trouble with vacuum states.) 

The assumption $p\in C^2$ in the closure of the range of $\rho$ will typically require \emph{absence of vacuum}: A common choice for the pressure is the polytropic law $p(\rho)=\rho^\gamma$, $\gamma>1$, and often $\gamma\leq 5/3$ (the exponent $5/3$ corresponds to a monoatomic gas). As such a function $p$ is twice differentiable only away from $\rho=0$, the assumption to justify the Taylor expansion boils down to absence of vacuum states. Note that this property (namely, $\rho\geq c>0$) is not necessarily propagated in time: Suppose the initial density satisfies $\rho_0\geq c>0$, then the maximum principle for transport equations~\cite{dipernalions} implies that $\rho$ will remain bounded away from zero if the divergence of the velocity is bounded. This, however, is not necessarily satisfied for weak solutions of~\eqref{compE} (not even for the compressible Navier-Stokes system). We will get back to the vacuum problem in the next section.

More generally, the Taylor expansion strategy applies to essentially any system of conservation laws that possesses an entropy~\cite{GwMiSw, BTWP218, BGSTW}. To illustrate the point, let
\begin{equation}
\partial_t u+\partial_x f(u)=0
\end{equation}
be a scalar conservation law in one dimension, so that $u:\R\times[0,T]\to\R$ is the unknown and $f$ is a given smooth flux function. Let $\eta:\R\to\R$ be any convex function (an \emph{entropy}) and $q:\R\to\R$ a corresponding \emph{entropy flux}, meaning that $q'=f'\eta'$. Then it is easy to see that, again by the chain rule, the \emph{entropy equality}
\begin{equation}
\partial_t\eta(u)+\partial_xq(u)=0
\end{equation} 
holds, at least if $u$ is smooth. (The entropy equality is really the same thing as the renormalised equation~\eqref{renorm} for transport equations, or the local energy equality~\eqref{localen} for the Euler equations.) 

Using the arguments outlined above, one can then show that this is the case if~\eqref{EulerBesov} holds. Thus, the exponent $1/3$ appears universally, simply because the leading order in the Taylor expansion of a smooth function which does not commute with a mollification is the second order. Remarkably, shocks provide an easy example that the 1/3-condition~\eqref{EulerBesov} is optimal.

\section{Degenerate cases}
The reduction of an arbitrary nonlinearity to a quadratic one relies crucially on the boundedness of second derivatives for the nonlinearities and entropies involved. There are at least two interesting cases when this condition fails: First, when a transport equation is to be renormalised with $\eta(\rho)=|\rho|^p$ with $p<2$, and secondly when one considers the compressible Euler system with possible vacuum, for pressure laws $p(\rho)=\rho^\gamma$ with $1<\gamma<2$ (these are the physically interesting ones).

These problems have been studied in~\cite{AkWi} and~\cite{AkDeSkWi}, respectively. We give here a brief outline of the latter. Recall the discussion of the compressible Euler system in the previous section. One idea is to approximate $p$ locally uniformly by a function $p^\delta$ with bounded second derivatives (up to $\rho=0$ of course). The main error term introduced by this additional layer of approximation takes the form
\begin{equation*}
\int_{\mathbb{T}^3}u_\eps \cdot\nabla[p^\delta(\rho)_\eps-p(\rho)_\eps]dx=-\int_{\mathbb{T}^3}\operatorname{div}u_\eps [p^\delta(\rho)_\eps-p(\rho)_\eps]dx,
\end{equation*}
and thus converges to zero, uniformly in $\epsilon$, provided $\operatorname{div} u$ is a bounded measure. While for the Euler equations this condition (which is quite popular in the theory of hyperbolic conservation laws, see~\cite{ChenFrid}) cannot be guaranteed a priori, for the compressible Navier-Stokes equations~\eqref{compNSE} it follows directly from the energy estimate. We therefore obtain that the compressible Navier-Stokes equations conserve energy, even with possible vacuum, under the assumptions of Theorem~\ref{compressibleonsager} except the $C^2$ condition on the pressure law. This yields a nice complement to the reuslts in~\cite{ChengYu}.

If one is not prepared to make such an assumption on $\operatorname{div}u$, one needs to consider two commutator terms involving the pressure, which are therefore sensitive to the condition that $p''$ be bounded. These two commutators, which appear in the course of the computation, are
\begin{equation*}
R^1_\eps:=\int_{\mathbb{T}^3}\operatorname{div} u_\eps(p(\rho_\eps)-p(\rho)_\eps)dx,
\end{equation*}
and
\begin{equation*}
R^2_\eps:=\int_{\mathbb{T}^3}\operatorname{div} (\rho_\eps u_\eps-(\rho v)_\eps) P'(\rho_\eps)dx.
\end{equation*}

It is not difficult to show $R^1_\eps\to0$ as long as $\rho$, $u$ satisfy~\eqref{BesovVMO}, \eqref{BesovVMO2} (in the space-time sense of footnote~\ref{footnote}) with $p=q=3$ and
\begin{equation*}
\gamma\alpha+\beta>1,
\end{equation*}
which is a stronger version of the previous condition $2\alpha+\beta>1$. (To show convergence of $R^1_\eps$ under this condition, one uses ``Taylor expansion to order $\gamma$'', see~\cite[Lemma 4.2]{AkDeSkWi}.)

The other commutator $R^2_\eps$ is more delicate. We compute
\begin{equation*}
\begin{aligned}
R^2_\eps&=\int_{\mathbb{T}^3}\operatorname{div} (\rho_\eps u_\eps-(\rho u)_\eps) P'(\rho_\eps)dx\\
&=- \int_{\{\rho_\eps>0\}} (\rho_\eps u_\eps-(\rho u)_\eps) \cdot P''(\rho_\eps)\nabla\rho_\eps dx\\
&\sim \int_{\{\rho_\eps>0\}} (\rho_\eps-\rho) (u_\eps- u) \cdot \rho_\eps^{\gamma-2}\nabla\rho_\eps dx,
\end{aligned}
\end{equation*}
then split the domain of integration into $B^\eps:=\{0<\rho_\eps<\eps^\alpha\}$ and $C^\eps :=\{\rho_\eps\geq\eps^\alpha\}$, and only consider integration over $B^\eps$ here (the $C^\eps$ part is easier):

\begin{equation}\label{333}
\begin{aligned}
&\left|\int_{B^\eps} (\rho_\eps-\rho) (u_\eps- u) \cdot \rho_\eps^{\gamma-2}\nabla\rho_\eps dx\right|\\
&\leq \int_{B^\eps} \left|\frac{\rho_\eps-\rho}{\rho_\eps}\right| |u_\eps- u| \left|\rho_\eps^{\gamma-1}\right||\nabla\rho_\eps| dx\\
&\leq \eps^{\alpha(\gamma-1)}\int_{\mathbb{T}^3} \left|\frac{\rho_\eps-\rho}{\rho_\eps}\right| |u_\eps- u| |\nabla\rho_\eps| dx\\
&\leq \eps^{\alpha(\gamma-1)}\|u_\eps-u\|_{L^3}\|\nabla\rho_\eps\|_{L^3}\left\|\frac{\rho_\eps-\rho}{\rho_\eps}\right\|_{L^3}\\
&\lesssim  \eps^{\gamma\alpha+\beta-1}\left\|\frac{\rho_\eps-\rho}{\rho_\eps}\right\|_{L^3}.
\end{aligned}
\end{equation}

Now all is well as long as $\left\|\frac{\rho_\eps-\rho}{\rho_\eps}\right\|_{L^3}$ is bounded uniformly in $\eps$; however, in general, this may be false: If $p>1$, then there are smooth non-negative functions $\rho$ such that $\left\|\frac{\rho_\eps-\rho}{\rho_\eps}\right\|_{L^p}\to\infty$ as $\epsilon\searrow 0$ \cite[Subsection 4A]{AkDeSkWi}. Only for $p=1$ are we able to control this term~\cite[Lemma 4.3]{AkDeSkWi}.

This suggests we should use a H\"older $\infty-\infty-1$ estimate instead of a $3-3-3$ estimate in~\eqref{333}; for this, in turn, we need to require $\rho$ and $v$ to be H\"older continuous. We thus arrive at the following result:

\begin{theorem}
Replace $2\alpha+\beta$ with $\gamma\alpha+\beta$ as well as~\eqref{BesovVMO} and~\eqref{BesovVMO2} with $\rho\in C^\alpha(\mathbb{T}^3\times(0,T))$ and $u\in C^\beta(\mathbb{T}^3\times(0,T))$ in the assumptions of Theorem~\ref{compressibleonsager}, and allow for $1<\gamma<2$, $\rho\geq0$. Then the energy is conserved.
\end{theorem}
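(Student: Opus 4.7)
The plan is to follow the mollification/commutator scheme already used for Theorem~\ref{compressibleonsager}: mollify~\eqref{compE} in space, test the mollified momentum equation against $u_\epsilon$, combine with the mollified continuity equation tested against $\tfrac12|u_\epsilon|^2+P'(\rho_\epsilon)$, and control the resulting error terms. All commutators not involving the pressure law $p(\rho)=\rho^\gamma$ are of transport/advection type and are handled exactly as in Theorem~\ref{compressibleonsager}, because the H\"older hypotheses $\rho\in C^\alpha$, $u\in C^\beta$ imply conditions~\eqref{BesovVMO} and~\eqref{BesovVMO2} with $p=q=3$ and any $\alpha'<\alpha$, $\beta'<\beta$, and the algebra of the estimates (using $\alpha+2\beta>1$) is unchanged. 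So only the two pressure commutators $R^1_\epsilon$ and $R^2_\epsilon$ from the discussion preceding the theorem require new work.

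For $R^1_\epsilon=\int\operatorname{div}u_\epsilon\,[p(\rho_\epsilon)-p(\rho)_\epsilon]\,dx$ the bounded-$p''$ Taylor argument is replaced by the ``Taylor expansion to order $\gamma$'' noted in the excerpt: for $\gamma\in(1,2)$ and $a,b\ge0$ one has $|p(a)-p(b)-p'(b)(a-b)|\lesssim|a-b|^\gamma$. Applying this pointwise with $a=\rho(x-y)$, $b=\rho(x)$, multiplying by $\chi_\epsilon(y)$ and integrating gives $\|p(\rho_\epsilon)-p(\rho)_\epsilon\|_{L^\infty}\lesssim\epsilon^{\gamma\alpha}$; combined with $\|\operatorname{div}u_\epsilon\|_{L^\infty}\lesssim\epsilon^{\beta-1}$ this yields $|R^1_\epsilon|\lesssim\epsilon^{\gamma\alpha+\beta-1}\to0$ under the hypothesis $\gamma\alpha+\beta>1$.

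The critical commutator is
\[
R^2_\epsilon\sim\int_{\{\rho_\epsilon>0\}}(\rho_\epsilon-\rho)(u_\epsilon-u)\cdot\rho_\epsilon^{\gamma-2}\nabla\rho_\epsilon\,dx,
\]
whose weight $\rho_\epsilon^{\gamma-2}$ is singular at the vacuum. I split the domain as in the excerpt into $B^\epsilon=\{0<\rho_\epsilon<\epsilon^\alpha\}$ and $C^\epsilon=\{\rho_\epsilon\ge\epsilon^\alpha\}$. On $C^\epsilon$ the lower bound $\rho_\epsilon\ge\epsilon^\alpha$ tames $\rho_\epsilon^{\gamma-2}$ and the pointwise H\"older estimates $|\rho_\epsilon-\rho|\lesssim\epsilon^\alpha$, $|u_\epsilon-u|\lesssim\epsilon^\beta$, $\|\nabla\rho_\epsilon\|_{L^\infty}\lesssim\epsilon^{\alpha-1}$ deliver a bound of order $\epsilon^{\gamma\alpha+\beta-1}$. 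On $B^\epsilon$ one factors out $\rho_\epsilon^{\gamma-1}\le\epsilon^{\alpha(\gamma-1)}$ and abandons the $L^3$--$L^3$--$L^3$ estimate of~\eqref{333} in favour of an $L^\infty$--$L^\infty$--$L^1$ estimate:
\[
\left|\int_{B^\epsilon}(\rho_\epsilon-\rho)(u_\epsilon-u)\cdot\rho_\epsilon^{\gamma-2}\nabla\rho_\epsilon\,dx\right|\lesssim\epsilon^{\alpha(\gamma-1)}\|u_\epsilon-u\|_{L^\infty}\|\nabla\rho_\epsilon\|_{L^\infty}\left\|\tfrac{\rho_\epsilon-\rho}{\rho_\epsilon}\right\|_{L^1}\lesssim\epsilon^{\gamma\alpha+\beta-1}\left\|\tfrac{\rho_\epsilon-\rho}{\rho_\epsilon}\right\|_{L^1}.
\]
This works precisely because the H\"older assumptions place $u_\epsilon-u$ and $\nabla\rho_\epsilon$ in $L^\infty$, leaving the troublesome ratio $(\rho_\epsilon-\rho)/\rho_\epsilon$ in the single remaining slot $L^1$.

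The main obstacle, and the technical heart of the proof, is therefore to establish the uniform bound $\|(\rho_\epsilon-\rho)/\rho_\epsilon\|_{L^1(\T^3)}\le C$ independently of $\epsilon$ for H\"older-continuous $\rho\ge0$. This is genuinely delicate, since both numerator and denominator vanish on the same set and (as emphasised in the excerpt) the analogous $L^p$ estimate for $p>1$ is known to fail. Granting this $L^1$ bound, the displays above give $R^1_\epsilon,R^2_\epsilon\to0$, which together with the transport-type estimates from Theorem~\ref{compressibleonsager} yields the local energy identity in the sense of distributions.
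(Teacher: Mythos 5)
Your outline reproduces the paper's own route essentially step for step: the non-pressure commutators are handled exactly as in Theorem~\ref{compressibleonsager} (the space-time H\"older hypotheses give the time-shifted Besov-type conditions of the footnote), $R^1_\epsilon$ is treated by the Taylor expansion to order $\gamma$, yielding $\|p(\rho_\epsilon)-p(\rho)_\epsilon\|_{L^\infty}\lesssim\epsilon^{\gamma\alpha}$ against $\|\Div u_\epsilon\|_{L^\infty}\lesssim\epsilon^{\beta-1}$, and $R^2_\epsilon$ is split over $B^\epsilon$ and $C^\epsilon$ with the $3$--$3$--$3$ estimate of~\eqref{333} replaced by the $\infty$--$\infty$--$1$ H\"older estimate, which is precisely the switch the paper advocates and which produces the factor $\epsilon^{\gamma\alpha+\beta-1}\|(\rho_\epsilon-\rho)/\rho_\epsilon\|_{L^1}$. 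All of these steps are correct and coincide with the paper's argument.

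The genuine gap is the ingredient you explicitly ``grant'': the uniform-in-$\epsilon$ bound $\bigl\|(\rho_\epsilon-\rho)/\rho_\epsilon\bigr\|_{L^1(\{\rho_\epsilon>0\})}\le C$. This is not a routine mollification estimate but the decisive new point of the theorem; the paper delegates exactly this to~\cite[Lemma 4.3]{AkDeSkWi}, and without it your estimate of the $B^\epsilon$ contribution proves nothing. To see why it cannot be waved through: the half $(\rho_\epsilon-\rho)_+/\rho_\epsilon\le 1$ is trivial, but on $B^\epsilon$ one has $\rho_\epsilon<\epsilon^\alpha$, so the H\"older bound $|\rho-\rho_\epsilon|\lesssim\epsilon^\alpha$ gives no control of $(\rho-\rho_\epsilon)_+/\rho_\epsilon$ there, and the failure of the analogous $L^p$ bound for every $p>1$ (\cite[Subsection 4A]{AkDeSkWi}) shows that any proof must exploit the structure of the mollification near the vacuum set $\{\rho=0\}$ rather than generic size estimates. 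You correctly locate this as the technical heart, but locating it is not proving it: to complete the argument you must either supply a proof of this $L^1$ lemma or invoke the cited one.
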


\end{document}